\pgfplotsset{compat=1.9}
\newcommand{\Omh}{\Omega_h}
\newcommand{\Th}{\mathcal{T}_h}
\newcommand{\EK}{\mathcal{E}^K}
\newcommand{\Pinabla}{\Pi^\nabla_K}
\newcommand{\Svem}{S_a^K}
\newcommand{\Vh}{V_h}
\DeclareMathOperator{\sinc}{sinc}
\newcommand{\Bh}{\mathcal{B}_{h,\gamma}}
\newcommand{\Ah}{\mathcal{A}_{h,\gamma}}
\newcommand{\Eb}{{\mathcal{E}^\partial }}
\newcommand{\Pnphi}{\Pi^\nabla\! (\phi)}
\newcommand{\Pnpsi}{\Pi^\nabla\!(\psi)}
\newcommand{\dn}{\partial_\nu}
\newcommand{\Oh}{\Omega_h}
\newcommand{\Pn}{\Pi^\nabla\!}
\newcommand{\Hoh}{\mathcal{H}(\Oh)}
\newtheorem{theorem}{Theorem}[section]
\newtheorem{corollary}[theorem]{Corollary}
\newtheorem{lemma}[theorem]{Lemma}
\theoremstyle{definition}
\newtheorem{assumption}[theorem]{Assumption}
\theoremstyle{remark}
\newtheorem{remark}[theorem]{Remark}
\numberwithin{equation}{section}
\newcommand{\roundPrecision}{2}
\definecolor{darkpastelgreen}{rgb}{0.01, 0.75, 0.24}
\definecolor{cadmiumgreen}{rgb}{0.0, 0.42, 0.24}
\title{High order VEM on curved domains.}
\author[S. Bertoluzza]{Silvia Bertoluzza}
\address{IMATI ``E. Magenes'', CNR, Pavia (Italy)}%
\email{silvia.bertoluzza@imati.cnr.it}%
\author[M. Pennacchio]{Micol Pennacchio}
\address{IMATI ``E. Magenes'', CNR, Pavia (Italy)}%
\email{micol.pennacchio@imati.cnr.it}%
\author[D. Prada]{Daniele Prada}
\address{IMATI ``E. Magenes'', CNR, Pavia (Italy)}%
\email{daniele.prada@imati.cnr.it}%
\thanks{{This paper has been realized in the framework of ERC Project CHANGE, which has received funding from the European Research Council (ERC) under the European Union’s Horizon 2020 research and innovation programme (grant agreement No 694515), and it was partially supported by INdAM - GNCS}}%
\subjclass{}%
\keywords{Virtual Element method, Nitsche's method, curved domain}%
\begin{document}

\begin{textblock*}{15cm}(2.5cm,25.8cm) 
	\noindent
	\scriptsize{This is the postprint version of the paper published in {\it Rend. Lincei Mat. Appl.} Vol. 30, No. 3, pp. 391–412 (2019),  doi: {10.4171/RLM/853}. The original publication is available at https://www.ems-ph.org.
	}
\end{textblock*}

\begin{abstract}

We deal with the virtual element method (VEM) for solving the Poisson equation on a domain $\Omega$ with curved boundary. Given a polygonal approximation $\Omega_h$ of the domain $\Omega$, the standard order $m$ VEM~\cite{hitchVEM}, for $m$ increasing, leads to a suboptimal convergence rate. We adapt the approach of \cite{BDT} to VEM and we prove that an optimal convergence rate can be achieved by using a suitable correction depending on high order normal derivatives of the discrete solution at the boundary edges of $\Omega_h$, which, to retain computability, is evaluated after applying the projector $\Pi^\nabla$ onto the space of polynomials.   
Numerical experiments confirm the theory.
\end{abstract}

\maketitle


\section{Introduction}

The virtual element method (VEM) is a PDE discretization framework 
designed to easily handle meshes consisting of very general  polygonal or polyhedral elements \cite{basicVEM}. 
The method can be considered as  
a generalization of the Finite Element Method (FEM) to polytopal tessellations, in that it looks for the solution in a conforming discretization space with a Galerkin approach. By  giving up conformity in the discretization of the bilinear form corresponding to the differential operator, the method manages to avoid the explicit construction of the basis functions (whence the name {\em virtual}). Everything is computed directly in terms of the degrees of freedom by resorting to suitable ``computable'' (in terms of the degrees of freedom) elementwise projectors onto the space of polynomials (see \cite{hitchVEM}), ultimately allowing to define a discrete bilinear form satisfying polynomial exactness and stability properties which allow to prove  optimal error estimate for discretization of (arbitrary) order $m$.
Different model problems 
have already been tackled by using VEM (\cite{beirao_parab,Antonietti_VEM_Stokes,Antonietti_VEM_Cahn,beirao_stokes,beirao_Navier_Stokes,perugia_Helmholtz,beirao_elastic,beirao_linear_elasticity,VEM_3D_elasticity,VEM_mixed,VEM_discrete_fracture,VEM_Laplace_Beltrami}), and, while most of the literature deals with the $h$ version of the method, the $p$ and $hp$ versions were also discussed and analyzed (\cite{antonietti_p_VEM,beirao_hp,beirao_hp_exponential}).

In this paper we consider the problem of extending the method to problems in domains with smooth curved boundaries.  As it happens in the Finite Element case,  the approximation of the curved domain by straight facets introduces an error that, for higher order methods, can dominate the analysis. 
Different approaches for the accurate treatment of curved domains in the finite element framework can be found in literature, see e.g. \cite{FEMsurvey_curved}. Among the different possible approaches to such a problem, following the guidelines of \cite{BDT}, we choose here to approximate the curved domain $\Omega$ with a polygonal domain $\Oh$, while compensating for the discrepancy in the geometry by suitably modifying the bilinear form. Complying with the VEM philosophy, the modified bilinear form will retain the property of being computable in terms of the degrees of freedom. 
Of course, other approaches are possible. In \cite{VEM_curved_beirao} the authors propose a direct definition of a modified virtual element space that accommodates curved elements, whose boundary matches exactly the boundary of $\Omega$. While loosing exact reproduction of polynomials, the resulting method retains optimality and, contrary to the one we propose here, it is immediately well suited to deal with curved interior interfaces.  On the other hand, our approach has the advantage of only requiring, for the implementation, minor modifications with respect to the polygonal case. 
To the best of our knowledge, other numerical methods that can handle curved polytopal meshes are only \cite{Mimetic_curved} and \cite{HDG_curved}.

The paper focuses on a simple elliptic model problem in 2D and it is organized as follows. As the projection method of \cite{BDT} combines Nitsche's technique for imposing non homogeneous boundary conditions \cite{Nitsche} with the 
improved accuracy polygonal domain approximation of \cite{Thomee}, we start, in Section \ref{sec:notation}, to adapt Nitsche's method to the Virtual Element framework and we provide a theoretical analysis of the resulting discretization, proving stability and an error estimate. In Section \ref{sec:curvo} we introduce and analyze the discretization for the problem on curved domains, proving also in this case stability and optimal error estimate. Finally, in Section \ref{sec:expes}, we test the method on several test cases, with methods of different order.  Throughout the paper, we will use the notation $A \lesssim B$ (resp. $A \gtrsim B$) to signify that the quantity $A$ is bounded from above (resp. from below) by a constant $C$ times the quantity $B$, with $C$ independent of the mesh size parameter $h$, the diameter $h_K$ and the specific shape of the polygon $K$, but possibly depending on the polynomial order $m$ of the method, and on the shape regularity constant $\alpha_0$ and $\alpha_1$ appearing in Assumption \ref{shapereg}.


\newcommand{\PolyDom}{{\Omega}}

\section{The Nitsche's method in the Virtual Element Context}\label{sec:notation} Before considering the problem of solving a PDE on a domain with a curved boundary, let us discuss how Nitsche's method for imposing non  homogeneous boundary condition \cite{Nitsche} can be applied in the context of the virtual element method.
Throughout this section let then $\PolyDom$ denote a bounded polygonal domain. To fix the ideas, we consider the following simple model problem: 
\begin{equation}\label{prob_mod_PolyDom}
-\Delta u = f, \text{ in }\PolyDom, \qquad u = g, \text{ on }\partial \PolyDom,
\end{equation}
with $f \in L^2(\Omega)$ and $g \in H^{1/2}(\Omega)$. Assume that we are given a family of quasi uniform tessellations $\Th$ of $\PolyDom$ into  polygonal elements $K$ of diameter $h_K \simeq h$. 
We make the
following standard regularity assumptions on the polygons of the tessellation:
\begin{assumption}\label{shapereg}
	There exists constants $\alpha_0,\alpha_1 > 0$ such that:
	\begin{enumerate}[(i)]
		\item  each element $K \in \Th$ is star-shaped with respect to a ball of radius $\geq \alpha_0 h_K$;
		\item  for each element $K$  in $\Th$ the distance between any two vertices of $K$ is $\geq \alpha_1 h_K$.
			\end{enumerate}
\end{assumption}

Under Assumptions \ref{shapereg}, several bounds hold uniformly in $h_K$ \cite{Lipnikov}. In particular, in the following we will make use of an inverse inequality on the space $\mathbb{P}_m$ of  polynomials of order less than or equal to $m$: for all $p \in \mathbb{P}_m$ and for all $j,k$ with $0 \leq j \leq k$ it holds that
\begin{equation}\label{inversebase}
\| p \|_{k,K} \lesssim h_K^{j-k} \| p \|_{j,K}.
\end{equation}
Moreover we will make use of the following trace inequality: for all $\phi \in H^1(K)$ we have 
\begin{equation}\label{trace1}
\| \phi \|_{0,\partial K} \lesssim h_K^{-1/2} \| \phi \|_{0,K} + h_K^{1/2} | \phi |_{1,K}.
\end{equation}

\

We will consider the standard order $m$ Virtual Element discretization space (\cite{basicVEM}), whose definition we briefly recall.
For each  polygon $K \in \Th$ we let the space $\mathbb{B}_m(\partial K)$ be defined as
\begin{gather*}
\mathbb{B}_m(\partial K) = \{ v \in C^0(\partial K): v|_{e} \in \mathbb{P}_m\ \forall e \in \EK \},
\end{gather*}
where 
$\EK$ denotes 
the set of edges of the polygon $K$.   We introduce the local VE space as: 
\[
V^{K,m} =   \{ v \in H^1(K):\ v|_{\partial K} \in \mathbb{B}_m(\partial K),\ \Delta v \in \mathbb{P}_{m-2} \}
\]
(with $\mathbb{P}_{-1}=\{0\}$). The global discrete VE space $V_h$ is then defined as
\begin{align}\label{VEMm}
V_h &= \{ v \in H^1(\PolyDom): v|_{K} \in V^{K,m}\ \forall K \in \Th \} =\\
& = \{ v \in H^1(\PolyDom):\forall K \in \Th\  v|_{\partial K} \in \mathbb{B}_m(\partial K),\ \Delta v|_K \in  \mathbb{P}_{m-2} \}. \nonumber 
\end{align}
A function in $V_h$ is uniquely determined by the following degrees of freedom 
\begin{itemize}
	\item its  values at the vertices of the tessellation;
	\item (only for $m \geq 2$) for each edge $e$, its values at the $m-1$ internal points of the $m+1$-points Gauss-Lobatto quadrature rule on $e$;
	\item (only for $m \geq 2$)  for each element $K$, its moments in $K$ up to order $m-2$. 
\end{itemize} 
For any given function $w \in H^2(\Omega)$ we can then define the unique function $w_I \in V_h$ such that: a) the values of $w$ and $w_I$ at the vertices of the tessellation coincide;
b) for each edge $e$, the values of $w$ and $w_I$  at the $m-1$ internal points of the $m+1$-points Gauss-Lobatto quadrature rule on $e$ coincide;
c) for each element $K$, the moments up to order $m-2$ of $w$ and $w_I$  in $K$ coincide. 
The function $w_I$ satisfies the following local approximation bound \cite{basicVEM}: if $w \in H^s(K)$, with $2 \leq s \leq m+1$, then
\begin{equation}\label{VEMapproxI}
\| w - w_I \|_{0,K} + h_K | w - w_I |_{1,K} \lesssim h_K^s | w |_{s,K}.
\end{equation}

\
\newcommand{\pwPoly}{\mathbb{P}^*_m}

Let  now
\[a(\phi,\psi)= \int_\PolyDom \nabla \phi \cdot \nabla \psi, \qquad a^K(\phi,\psi) = \int_K\nabla \phi \cdot \nabla \psi,\]
 and let $\Pinabla : H^1(K) \rightarrow \mathbb{P}_{m}(K)$ denote the projection defined by
\[
a^K(\Pinabla \phi, p) = a^K(\phi,p), \quad \forall p \in \mathbb{P}_m, \qquad \int_K \Pinabla \phi = \int_K \phi.
\]
We recall that for $w_h \in V^{K,m}$, $\Pinabla w_h$ can be computed directly from the values of the degrees of freedom (\cite{hitchVEM}) without the need of explicitly constructing it (which would imply somehow solving a partial differential equation), by taking advantage of the identity 
\[
\int_{K}\nabla w_h \cdot \nabla p = - \int_{K} w_h \Delta p + \int_{\partial K} w_h \frac{\partial p}{\partial \nu_K},
\]
that allows to express the term of the left hand side in terms of the interior moments of $w_h$ (for $p \in \mathbb{P}_m$, $\Delta p$ is a polynomial of degree less than or equal to $m-2$) and of an integral on the boundary (where $w_h$ is a known piecewise polynomial).
  Letting $\pwPoly$ denote the space of discontinuous piecewise polynomials of order less than or equal to $m$
\[
\pwPoly = \{\phi \in L^2(\Omega): \phi|_K \in\mathbb{P}_m\ \forall K \in \Th\},
\]
 we let
 $\Pi^\nabla : H^1(\Omega) \to \pwPoly$ be defined by assembling, element by element, the $\Pinabla$'s:
 \[
 \Pi^\nabla \phi|_K = \Pinabla (\phi|_K), \ \forall K \in \Th.
 \]

The discretization of \eqref{prob_mod_PolyDom} by the Nitsche's method would consist in looking for $u_h \in V_h$ such that for all $v_h \in V_h$ one has
\begin{align*}
a(u_h,v_h) - \int_{\partial\PolyDom} \partial_\nu u_h v_h - \int_{\partial\PolyDom} u_h\partial_\nu v_h + \gamma h^{-1} \int_{\partial\PolyDom} u_h v_h =\\
\int_{\PolyDom}f v_h  - \int_{\partial\PolyDom} g \partial_\nu v_h  + \gamma h^{-1} \int_{\partial\PolyDom} g v_h,
\end{align*}
where $\partial_\nu$ stands for ${\partial}/{\partial \nu}$, $\nu$ denoting the outer normal to $\Omega$. As typical for the Virtual Element method, both at the right hand side and at the left hand side of such an equation we find terms which are not ``computable", that is that can not be computed exactly with only the knowledge of the value of the degrees of freedom of $u_h$ and $v_h$. Besides the bilinear form $a$, which can be treated by the standard approach,  this is the case for  all the terms involving $\partial_\nu u_h$ and $\partial_\nu v_h$. 
As usually done in VEM, 
the bilinear form $a$
  is then replaced  with 
\[
a_h(u_h,v_h) = \sum_K a_h^K(u_h,v_h),
\]
where 
\[
a^{K}_h(\phi,\psi) = a^K(\Pinabla  \phi,\Pinabla  \psi) + \Svem(\phi - \Pinabla  \phi, \psi - \Pinabla  \psi).
\]
We recall that different choices are possible for the bilinear form $\Svem$ (see \cite{beirao_stab}),
the essential requirement being that it satisfies
$$  a^K(\phi,\phi) \lesssim  \Svem(\phi,\phi) \lesssim a^K(\phi,\phi),\quad \forall \phi \in V^{K,m}\ \text{ with } \Pinabla  \phi=0,
$$
 so that 
the local discrete bilinear forms satisfy the
following two properties:
\begin{itemize}
\item {\em Stability}: 
$$
   a^K(\phi,\phi) \lesssim   a^K_h(\phi,\phi) \lesssim a^K(\phi,\phi) ,\quad \forall \phi \in V^{K,m}
$$
\item {\em m-consistency}: for any $\phi\in \Vh$ and $p\in \mathbb{P}_{m}(K)$ 
\begin{equation}\label{mconsistency}
a_h^K(\phi,p) = a^K(\phi,p). 
\end{equation}
\end{itemize}
In the numerical tests performed in Section \ref{sec:expes} we made the standard choice of defining $\Svem$ in terms of the vectors of local degrees of freedom as the properly scaled euclidean scalar product. 

\


%
\renewcommand{\epsilon}{\varepsilon}

As far as the terms involving the normal derivative are concerned, we treat them by replacing $\partial_\nu u_h$ and $\partial_\nu v_h$,  boundary edge by boundary edge,  respectively with 
 $\partial_\nu \Pi^\nabla(u_h)$ and $\partial_\nu \Pi^\nabla(v_h)$. 
 Then we can write the Nitsche's method for the VEM discretization of \ref{prob_mod}  as:{ find $u_h \in V_h$ such that for all $v_h \in V_h$ it holds that}
 \begin{multline}\label{problem_Nitsche}
 a_h(u_h,v_h) - \sum_{e\in \Eb} \int_e \dn \Pn(u_h) v_h- \sum_{e\in \Eb} \int_e \dn\Pn(v_h) u_h + \gamma h^{-1} \int_{\partial \PolyDom} u_h v_h \\ = \int_{\PolyDom} f v_h - \sum_{e \in \Eb} \int_e  g \left( \dn \Pn (v_h) - \gamma h^{-1} v_h \right),
 \end{multline}
 where $\gamma$ is a positive constant and $\mathcal{E}^\partial$ denotes the set of edges of $\Th$ lying on $\partial\Omega$.
 
 \
 
We introduce the norm:
\begin{equation}\label{norm_en}
\vvvert \phi \vvvert ^2_\PolyDom = |\phi |^2_{1,\PolyDom} + h^{-1} \| \phi \|^2_{0,\partial\PolyDom}
\end{equation}
and the space $\mathcal{H}(\PolyDom)$ defined as the closure of $C^{\infty}(\PolyDom)$ with respect to the norm $\vvvert \cdot \vvvert_\PolyDom$. Setting
\begin{align}\label{bilinear_Nitsche}
\Bh (\phi,\psi) &= a_h(\phi,\psi) - \sum_{e\in \Eb} \int_e \dn \Pnphi\psi \\ 
&- \sum_{e\in \Eb} \int_e \dn\Pnpsi \phi + \gamma h^{-1} \int_{\partial \PolyDom} \phi \psi, \nonumber
\end{align}
we start by proving the following lemma:
\begin{lemma}\label{lem:coercivity_Nitsche} For all $\phi$, $\psi \in \mathcal{H}(\PolyDom)$ we have 
		\begin{equation}\label{continuity0}	| \Bh(\phi,\psi)| \lesssim \vvvert \phi \vvvert_{\PolyDom}\, \vvvert \psi \vvvert_{\PolyDom}.\end{equation}	
	Moreover, there exists $\gamma_0 >0$  
	 such that, for $\gamma > \gamma_0$,
	 the bilinear form $\Bh$ verifies for all $\phi \in V_h$
	\begin{equation}\label{coercivity0}
	\Bh(\phi,\phi) \gtrsim 
	\vvvert \phi \vvvert_\PolyDom ^2,
	\end{equation}
	(the implicit constant in the two inequalities depending on $\gamma$).
\end{lemma}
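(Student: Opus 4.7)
The plan is to prove continuity and coercivity by decomposing $\Bh$ into its four constituent pieces and bounding each one using Cauchy--Schwarz together with the two standard ingredients at our disposal: the stability of $a_h^K$ (so that $a_h$ controls and is controlled by $|\cdot|_1^2$ element by element) and the local bound
\[
\| \dn \Pinabla \phi \|_{0,e} \lesssim h_K^{-1/2} |\Pinabla \phi|_{1,K} \lesssim h_K^{-1/2} | \phi|_{1,K}
\]
for each boundary edge $e \subset \partial K \cap \partial\PolyDom$, obtained by combining the trace inequality \eqref{trace1} applied to the polynomial $\nabla\Pinabla\phi$ with the inverse inequality \eqref{inversebase} (which reduces the $H^1$ boundary norm to an $L^2$ interior norm) and the $H^1$-stability of the projector $\Pinabla$.

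For continuity \eqref{continuity0} I would bound the four terms of $\Bh(\phi,\psi)$ separately. The volume contribution $a_h(\phi,\psi)$ is handled by the discrete Cauchy--Schwarz inequality for $a_h$ followed by the upper bound in the stability estimate, giving $a_h(\phi,\psi) \lesssim |\phi|_{1,\PolyDom}|\psi|_{1,\PolyDom}$. For the symmetric Nitsche consistency terms, I apply Cauchy--Schwarz edge by edge and use the estimate displayed above to get
\[
\Bigl|\sum_{e\in\Eb}\int_e \dn\Pnphi\,\psi\Bigr|
\lesssim \sum_{e\in\Eb} h^{1/2}\|\dn\Pnphi\|_{0,e}\cdot h^{-1/2}\|\psi\|_{0,e}
\lesssim |\phi|_{1,\PolyDom}\,h^{-1/2}\|\psi\|_{0,\partial\PolyDom},
\]
which is bounded by $\vvvert\phi\vvvert_\PolyDom \vvvert\psi\vvvert_\PolyDom$; the symmetric term is treated identically. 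Finally, the penalty term is directly bounded by $h^{-1}\|\phi\|_{0,\partial\PolyDom}\|\psi\|_{0,\partial\PolyDom}$, again controlled by $\vvvert\phi\vvvert_\PolyDom \vvvert\psi\vvvert_\PolyDom$.

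For coercivity \eqref{coercivity0} on $V_h$, write
\[
\Bh(\phi,\phi) = a_h(\phi,\phi) - 2\sum_{e\in\Eb}\int_e \dn\Pnphi\,\phi + \gamma h^{-1}\|\phi\|_{0,\partial\PolyDom}^2.
\]
The lower stability bound gives $a_h(\phi,\phi) \gtrsim |\phi|_{1,\PolyDom}^2$ with some constant $C_s > 0$. The crucial step is to absorb the cross term using Young's inequality: for any $\epsilon > 0$,
\[
2\Bigl|\sum_{e\in\Eb}\int_e \dn\Pnphi\,\phi\Bigr|
\leq \epsilon \sum_{e\in\Eb} h\|\dn\Pnphi\|_{0,e}^2 + \epsilon^{-1}\sum_{e\in\Eb} h^{-1}\|\phi\|_{0,e}^2
\leq C\epsilon |\phi|_{1,\PolyDom}^2 + \epsilon^{-1}h^{-1}\|\phi\|_{0,\partial\PolyDom}^2,
\]
where in the last step I use the edge bound from the first paragraph (summed over all elements touching $\partial\PolyDom$, with shape regularity bounding the local number of such edges). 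Choosing $\epsilon$ so that $C\epsilon = C_s/2$ absorbs half of $a_h(\phi,\phi)$, leaving
\[
\Bh(\phi,\phi) \geq \tfrac{C_s}{2}|\phi|_{1,\PolyDom}^2 + (\gamma - \epsilon^{-1})h^{-1}\|\phi\|_{0,\partial\PolyDom}^2,
\]
and taking $\gamma_0 = \epsilon^{-1} + 1$ (or any larger threshold) yields \eqref{coercivity0} for all $\gamma > \gamma_0$.

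The only delicate point is the edge estimate on $\dn\Pinabla\phi$: it combines three ingredients (trace inequality, polynomial inverse inequality, $H^1$-continuity of $\Pinabla$) and must be applied carefully since $\dn$ is a restriction of a polynomial gradient on $K$, not of $\phi$ itself. Once this bound is secured, both the continuity estimate and the absorption argument in the coercivity proof are mechanical.
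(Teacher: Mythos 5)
Your proposal is correct and follows essentially the same route as the paper: the same edge estimate $\| \dn \Pnphi \|_{0,e} \lesssim h^{-1/2} | \phi |_{1,K_e}$ (stated as \eqref{inverse0} in the paper), Cauchy--Schwarz for continuity, and Young's inequality with a small parameter to absorb the cross term before fixing $\gamma_0$. The only difference is that you spell out the derivation of the edge bound and the use of the stability of $a_h$, which the paper leaves implicit.
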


\begin{proof}
We observe that, since $\Pnphi$ is a polynomial, it is not difficult to verify that the following inverse bound holds:
\begin{equation}\label{inverse0}
\| \dn \Pnphi \|_{0,e} \lesssim h^{-1/2} | \Pnphi |_{1,K_e} \leq h^{-1/2} | \phi |_{1,K_e},
\end{equation}
where, for $e \in \Eb$,  $K_e$ is the unique polygon of the tessellation having $e$ as an edge. Then	we have 
		\[
		\sum_{e\in \Eb} \int_e \dn \Pnphi\psi  \lesssim \sum_{e\in \Eb} \|  \dn \Pnphi \|_{0,e} \| \psi \|_{0,e} \lesssim  \sum_{e\in \Eb}  |  \phi |_{1,K_e} h^{-1/2} \| \psi \|_{0,e}.
		\]
		Obtaining \eqref{continuity0} is then not difficult.	
As far as \eqref{coercivity0} is concerned, we have
	\begin{equation}\label{firstbound}
	\Bh(\phi,\phi) = a_h(\phi,\phi) +  \gamma h^{-1} \| \phi \|_{0,\partial \PolyDom}^2 - 2 \langle \dn \Pnphi,\phi \rangle
	\end{equation}
	where we use the notation
	\[
	\langle \phi,\psi \rangle = \sum_{e \in \Eb} \int_e \phi \psi .
	\] 
	
	We now have the following bounds
\[
	\langle \dn \Pnphi, \phi \rangle \lesssim \sum_{e \in \Eb} \|  \dn \Pnphi \|_{0,e} \| \phi \|_{0,e}
	\]
Thanks to the inverse inequality (\ref{inverse0}), we can write, for $\varepsilon > 0$ arbitrary,
	\[
	\langle \dn \Pnphi, \phi \rangle \lesssim \sum_{e \in \Eb} h^{-1/2} | \phi |_{1,K_e} \| \phi \|_{0,e}
	\lesssim \frac \varepsilon 2 | \phi |^2_{1,\PolyDom} + \frac 1 {2\varepsilon} h^{-1} \| \phi \|^2_{0,\partial \PolyDom}. 	\]

	Substituting into \eqref{firstbound} we obtain, for a fixed positive constant $c_1$ independent of $h$
	\[
	\Bh(\phi,\phi) \gtrsim (1 - c_1  \varepsilon) | \phi |_{1,\PolyDom}^2 +( \gamma - \frac {c_1} \epsilon )  h^{-1} \| \phi \|^2_{0,\partial \PolyDom}.
	\]
	We now choose $\varepsilon = 1/(2c_1)$ and if $\gamma > \gamma_0$ with $\gamma_0$ chosen  in such a way that $\gamma_0 - c_1/\varepsilon > 0$,
	the thesis easily follows.
\end{proof}

Existence and uniqueness of the solution of (\ref{problem_Nitsche}) easily follow.

\

We are then able to prove the following result:
\smallskip

\begin{theorem} \label{theo:error_Nitsche}
	If  $u\in H^s(\Omega)$, with $2 \leq s \leq m+1$, and if we  chose $\gamma >\gamma_0$, with $\gamma_0$  given by Lemma \ref{lem:coercivity_Nitsche}, then the following error estimate holds
	\[
	\vvvert u  - u_h \vvvert_{\PolyDom} \lesssim h^{s-1} | u |_{s,\PolyDom} .
	\]
\end{theorem}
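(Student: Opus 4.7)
The plan is to bound $\vvvert u - u_h \vvvert_\Omega$ via the triangle inequality $\vvvert u - u_h \vvvert_\Omega \leq \vvvert u - u_I \vvvert_\Omega + \vvvert u_I - u_h \vvvert_\Omega$, where $u_I \in V_h$ is the standard VEM interpolant of $u$. The first term is controlled by combining the approximation estimate \eqref{VEMapproxI} with the trace inequality \eqref{trace1}: the $H^1$-seminorm part is directly $\lesssim h^{s-1}|u|_s$, and $h^{-1}\|u-u_I\|_{0,\partial\Omega}^2$ is similarly bounded after applying \eqref{trace1} edge by edge on the boundary. The second term is where the main work lies, and it will be attacked via coercivity (Lemma \ref{lem:coercivity_Nitsche}) applied to $\phi = u_I - u_h$.

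Setting $w_h = u_I - u_h \in V_h$, coercivity gives $\vvvert w_h\vvvert_\Omega^2 \lesssim \Bh(w_h,w_h)$, so it suffices to prove the consistency estimate
\[
|\Bh(u_I,v_h) - F(v_h)| \lesssim h^{s-1}|u|_{s,\Omega}\,\vvvert v_h\vvvert_\Omega
\]
for every $v_h \in V_h$, where $F$ denotes the right-hand side of \eqref{problem_Nitsche}. Since $u$ satisfies $-\Delta u = f$, integration by parts rewrites $F(v_h) = a(u,v_h) - \int_{\partial\Omega}\dn u\, v_h - \sum_{e\in\Eb}\int_e u\,\dn\Pnpsi_{v_h} + \gamma h^{-1}\int_{\partial\Omega} u\,v_h$. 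Subtracting, I obtain four terms: the bulk inconsistency $a_h(u_I,v_h) - a(u,v_h)$, the boundary term $\int_{\partial\Omega}(\dn u - \dn\Pn(u_I))v_h$, the symmetric term $\sum_e \int_e(u_I - u)\,\dn\Pnpsi_{v_h}$, and the penalty term $\gamma h^{-1}\int_{\partial\Omega}(u_I-u)v_h$.

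For the bulk term, introduce on each $K$ a polynomial approximant $u_\pi \in \mathbb{P}_m(K)$ of $u$; by $m$-consistency \eqref{mconsistency} we can replace $u_I$ by $u_I - u_\pi$ inside $a_h^K(u_I,v_h)-a^K(u_I,v_h)$, and the stability of $a_h^K$ together with standard polynomial approximation and \eqref{VEMapproxI} yields $\lesssim h^{s-1}|u|_s|v_h|_{1,\Omega}$; the residual term $a(u_I - u,v_h)$ is handled by Cauchy–Schwarz and \eqref{VEMapproxI}. The third term is controlled using the inverse bound \eqref{inverse0} on $\dn \Pn(v_h)$ together with the trace inequality \eqref{trace1} applied to $u - u_I$, giving $h^{-1/2}|v_h|_1\cdot h^{s-1/2}|u|_s$. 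The penalty term is immediate from Cauchy–Schwarz on $\partial\Omega$ together with trace and approximation estimates.

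The main obstacle is the boundary term $\int_{\partial\Omega}(\dn u - \dn\Pn(u_I))v_h$, because $\dn u$ is continuous but $\dn\Pn(u_I)$ is only a piecewise polynomial: I will split it as $\dn\Pn(u-u_I) + \dn(\Pn u - u)$. The first piece uses \eqref{inverse0} directly. For the second piece, since $\Pn u - u$ is smooth on each $K$, the trace inequality \eqref{trace1} gives $\|\dn(\Pn u - u)\|_{0,e} \lesssim h^{-1/2}|\Pn u - u|_{1,K_e} + h^{1/2}|\Pn u - u|_{2,K_e}$, and both contributions are $\lesssim h^{s-3/2}|u|_{s,K_e}$ by the approximation properties of $\Pn$ on $H^s$. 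Multiplying by $\|v_h\|_{0,e} \leq h^{1/2}\vvvert v_h\vvvert_\Omega$ and summing over $e \in \Eb$ closes the estimate. Collecting the four contributions gives the claimed bound on $\vvvert u_I - u_h\vvvert_\Omega$, and the triangle inequality concludes the proof.
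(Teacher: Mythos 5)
Your proposal is correct and follows essentially the same route as the paper: coercivity of $\Bh$ applied to $u_I-u_h$, a consistency decomposition into a bulk term handled via $u_\pi$ and $m$-consistency plus three boundary terms handled with the inverse bound \eqref{inverse0}, the trace inequality \eqref{trace1}, and the interpolation estimate \eqref{VEMapproxI}, followed by a triangle inequality with $\vvvert u-u_I\vvvert_\PolyDom$. The only cosmetic difference is that you split the normal-derivative term through $\Pn u$ whereas the paper splits it through $u_\pi$; the resulting bounds are identical.
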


\smallskip

\begin{proof} Let $u_I$ denote the  VEM interpolant and $u_\pi \in \pwPoly$ the $L^2(\Omega)$ projection of $u$ onto the space of discontinuous piecewise polynomials. 
For any $j$, $k$ with $0\leq j \leq k \leq m+1$ we have, for $u \in L^2(\Omega)$ with $u|_K \in H^k(K)$
\begin{equation}\label{VEMapproxpi}
\| w - w_\pi \|_{j,K} \lesssim h_K^{k-j} | w |_{k,K}.
\end{equation}
For $j \geq 1$ the bound \eqref{VEMapproxpi} can be proven by a standard argument combining the bound for $j = 0$ with an inverse inequality. Moreover we have, for $e \in \mathcal{E}^K \cap \Eb$,
\begin{equation}\label{localPoly}
\| \dn(w - w_\pi) \|_{0,e} \lesssim h^{-1/2} | w - w_\pi |_{1,K} + h^{1/2} | w - w_\pi |_{2,K} \lesssim h^{s - 3/2} | w |_{s,K}.
\end{equation}
We set $d_h = u_I-u_h$. Using the definition (\ref{bilinear_Nitsche}), summing and subtracting $u_\pi$, using  the $m$-consistency (\ref{mconsistency}) to replace $a_h(u_\pi,d_h)$ with $a(u_\pi,d_h)$ and then, summing and subtracting $u$,  we can write
	\begin{multline}
	\vvvert u_I - u_h \vvvert_\PolyDom^2 \lesssim \Bh(u_I, d_h) - \Bh(u_h,d_h) =	\\	 = \sum_{K\in\Th} a_h^K(u_I-u_\pi,d_h) + \sum_{K\in \Th} a^K(u_\pi - u,d_h) +  a(u,d_h) 	- \langle \dn u, d_h \rangle + \\ 
 \langle \dn(u - \Pn (u_I)),d_h \rangle - \langle u , \dn \Pn (d_h)	\rangle + \langle u - u_I, \dn  \Pn (d_h)	 \rangle  + \gamma h^{-1} \langle u,d_h \rangle 
	\\+ \gamma h^{-1}  \langle u_I-u,d_h \rangle 
	 - \int_{\PolyDom} f d_h + \langle  g, \dn \Pn(d_h) - \gamma h^{-1} d_h \rangle \\= E1 + E2 + E3 + E4  + E5 
\end{multline}		
	with
	\begin{gather*}
	E1 = \sum_{K\in\Th} a_h^K(u_I-u_\pi,d_h), \quad E2 = \sum_{K\in\Th} a^K(u_\pi - u,d_h), \quad \\
	E3 =  \langle \dn(u - \Pn (u_I)),d_h \rangle \quad 
	E4 = \langle u - u_I, \dn  \Pn (d_h) \rangle,	\\
	  E5 =  \gamma h^{-1}\langle u_I-u,   d_h\rangle,
	\end{gather*}
	where we used that, as $u$ is the solution of \eqref{prob_mod_PolyDom}, 
	\begin{gather*}
	a(u,d_h) - \langle \dn u, d_h \rangle - \int_{\PolyDom} f d_h = 0, 
	\\ \langle u , \dn \Pn (d_h)	\rangle - \gamma h^{-1} \langle u,d_h \rangle -  \langle  g, \dn \Pn(d_h) - \gamma h^{-1} d_h \rangle  = 0.
	\end{gather*}
	Let us then bound the different components of the error.
	
	\
	
	Both $E1$ and $E2$ are standardly encountered in the analysis of the  VEM method, and a bound can be found in the literature (see e.g. \cite{basicVEM}), yielding
	\[
	E1 \lesssim | d_h |_{1,\PolyDom} h^{s-1} | u |_{s,\PolyDom}, \quad \text{ and }\quad E2 \lesssim | d_h |_{1,\PolyDom} h^{s-1} | u |_{s,\PolyDom}.
	\]
	On the other hand we have
	\[
	E3 \lesssim  \sum_{e \in \Eb} \| \dn(u - \Pn (u_I)) \|_{0,e} \| d_h  \|_{0,e} .
	\]
	Now, using \eqref{inverse0} and \eqref{localPoly}, we have
	\begin{align*}
	\| \dn(u - \Pn u_I) \|_{0,e} \leq \| \dn(u - u_\pi) \|_{0,e} + \| \dn \Pn( u_\pi - u_I) \|_{0,e} \lesssim \\
	h^{s-3/2} | u |_{s,K} + h^{-1/2} (| u_\pi - u_I |_{1,K} + |u_I - u |_{1,K}),
	\end{align*}
	yielding
	\[
	E3 \lesssim h^{s-1} h^{-1/2} | u |_{s,\PolyDom} \| d_h \|_{0,\partial \PolyDom}.
	\]
	As far as $E4$ is concerned we have
	\begin{align*}
	E4 \leq \sum_{e \in \Eb} \| u - u_I \|_{0,e} \|  \dn  \Pn (d_h)	 \|_{0,e} \lesssim  \sum_{e \in \Eb} h^{s-1/2} | u |_{s,K_e} h^{-1/2}\,
	 |d_h |_{1,K_e} \lesssim \\
	 h^{s-1} | u |_{s,\PolyDom} | d_h |_{1,\PolyDom}.
	\end{align*}
	A similar argument yields
	\[
	E5 \lesssim h^{s-1} | u |_{s,\PolyDom} h^{-1/2} \| d_h \|_{0,\partial\PolyDom},
	\]
finally giving
\[
\vvvert u_I - u_h \vvvert_{\Omega}^2 \lesssim h^{s-1} | u |_{s,\Omega} \vvvert u_I - u_h \vvvert_\Omega.
\]
	Dividing both sides by $\vvvert u_I - u_h \vvvert_\PolyDom$ and using a triangular inequality we get the thesis.
\end{proof}


\newcommand{\dnh}{\partial_{\nu_h}}
\newcommand{\tg}{g^\star}

\section{The Virtual Element Method on domains with curved boundary}\label{sec:curvo}

Let now consider the solution of the same model problem
\begin{equation}\label{prob_mod}
	-\Delta u = f, \text{ in }\Omega, \qquad u = g, \text{ on }\partial \Omega
\end{equation}
with, once again, $f \in L^2(\Omega)$, $g \in H^{1/2}(\partial \Omega)$, where now $\Omega \subseteq \mathbb{R}^2$ is a convex domain with curved boundary $\partial\Omega$ assumed, for the sake of convenience, to be of class $C^\infty$. In order to solve such a problem by the Virtual Element method, 
we assume that $\Omega$ is approximated by a family of polygonal  domains $\Omh$, $0 < h \leq 1$,  each endowed with a quasi uniform shape regular tessellation $\Th$  into 
polygons $K$ with diameter $h_K \simeq h$, see Fig. \ref{fig:domain}.  We assume that all the vertices of $\Th$ lying  on $\partial \Omh$ also lie on $\partial \Omega$. As $\Omega$ is convex this implies that $\Omh \subseteq \Omega$.

\begin{figure}[htb]
	\centering
	\begin{picture}(200,100)
	\put(-120,0){\includegraphics[width=0.3\textwidth]{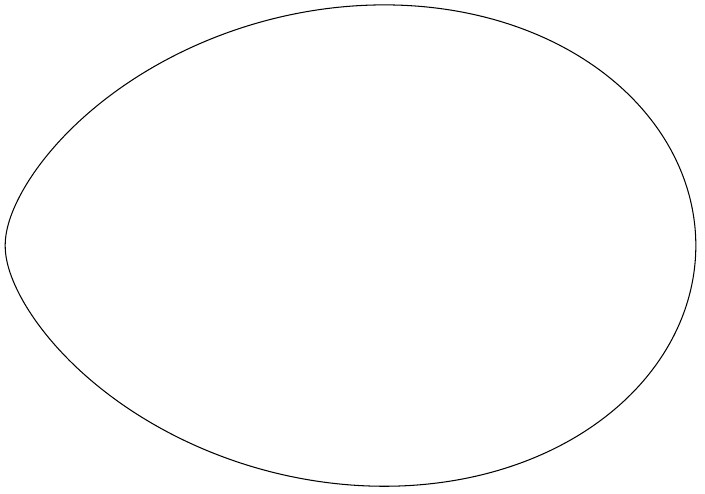}}
	\put(30,0){\includegraphics[width=0.3\textwidth]{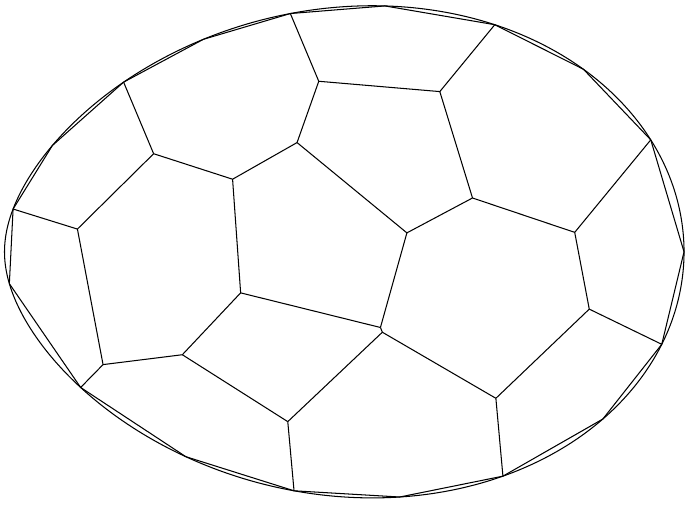}}
	\put(180,0){\includegraphics[width=0.3\textwidth]{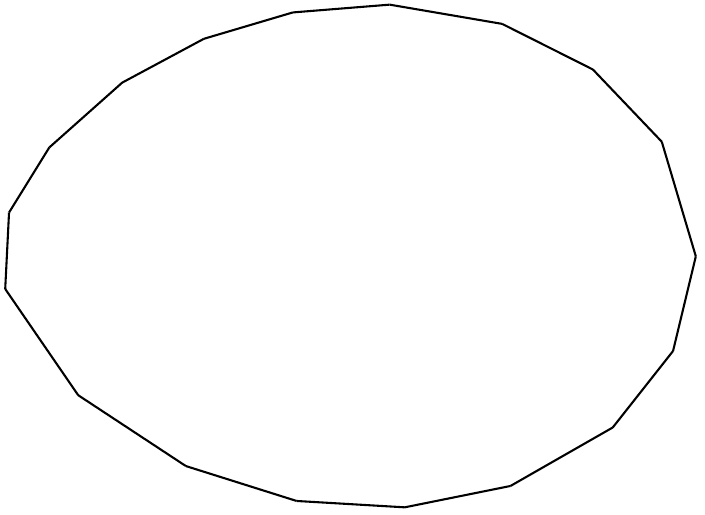}}
	\put(-50,50){$\Omega$}
	\put(90,50){$\Th$}
	\put(200,50){$\Omh$}
	\end{picture}
		\caption{Domain $\Omega$,  tessellation $\Th$ of  
		$\Omega $ into shape regular polygons and polygonal domain $\Omh \subset \Omega$ approximating $\Omega$. %
	}
	\label{fig:domain}
\end{figure}

\newcommand{\nuhx}{\nu_h}

\newcommand{\supd}{\delta_h}

\subsection{The Projection Method of Bramble, Dupont and Thom\'ee}\label{sec:BDT}
In order to deal with the curved boundary, following \cite{BDT} we apply  on the approximating domain $\Oh$ a modified version of Nitsche's method, which takes into account that the boundary data is given on $\partial \Omega$ rather than on $\partial \Oh$.

Letting $\nuhx$ be the outer normal to $\Omh$, for $x \in \partial\Oh$ we let $\delta(x) > 0$ denote the non negative scalar such that 
\[
x + \delta(x) \nuhx(x) \in \partial \Omega.
\]
It is known (\cite{BDT}) that, as $\Omega$ is smooth and convex, we have  that 
\begin{equation}
\supd = \sup_{x \in \partial\Omh} \delta(x)  = o(h^2).
\end{equation}\label{delta_h}

\

We let $V_h \subset H^1(\Oh)$  be the order $m$ VEM discretization space relative to the tessellation $\Th$ (defined by (\ref{VEMm})).  Setting $k =\lfloor m/2\rfloor$,  the projection method of \cite{BDT} reads as follows: 
{ find $u_h \in V_h$ such that for all $v_h \in V_h$ it holds that}
\begin{multline}\label{problem_BDT}
 \Bh(u_h,v_h)  -   \sum_{e\in \Eb }
 \int_e 
\left(\sum_{j=1}^k \frac{\delta^j}{j!} \dnh^j \Pn(u_h)\right)
 ( \dnh \Pn(v_h)  - \gamma h^{-1} v_h)  \\ = \int_{\Oh} f v_h -
\sum_{e \in \Eb} \int_e \tg  \left( \dnh \Pn (v_h) - \gamma h^{-1} v_h \right)
\end{multline}
where $\dnh^j u = ({\partial}_{\nu_h})^j u$ denotes the $j$-th partial derivative of $u$ in the $\nu_h$ direction and where, for $x \in \partial\Oh$, \[\tg (x) = g(x+\delta(x) \nuhx(x)).\]

\

We then introduce the bilinear form $\Ah$ defined by
\begin{align}
\Ah (\phi,\psi) = \Bh(\phi,\psi)  -  
\sum_{e\in \Eb }
\int_e 
\left(\sum_{j=1}^k \frac{\delta^j}{j!} \dnh^j \Pn(u_h)\right)
( \dnh \Pn(v_h)  - \gamma h^{-1} v_h),
\end{align}
and prove the following lemma:
\begin{lemma}\label{lem:coercivity}
	For all $\phi$, $\psi$ in $\Hoh$ it holds that
	\begin{equation}\label{continuity}	| \Ah(\phi,\psi) \lesssim \vvvert \phi \vvvert_{\Oh}\, \vvvert \psi \vvvert_{\Oh}.\end{equation}	
	Moreover, there exists $\gamma_0 >0$ such that for all $\gamma > 0$  the bilinear form $\Ah$ verifies for all $\phi \in V_h$
	\begin{equation}\label{coercivity}
	\Ah(\phi,\phi) \gtrsim %
	\vvvert \phi \vvvert^2_{\Oh}
	\end{equation}	 
	provided $h < h_0$ with $h_0 = h_0(\gamma) > 0$.
\end{lemma}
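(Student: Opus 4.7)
The plan is to view $\Ah$ as a small perturbation of $\Bh$, using Lemma \ref{lem:coercivity_Nitsche} as a black box and controlling the correction term
\[
R(\phi,\psi) = \sum_{e\in \Eb }\int_e \left(\sum_{j=1}^k \frac{\delta^j}{j!} \dnh^j \Pn(\phi)\right) \bigl( \dnh \Pn(\psi) - \gamma h^{-1} \psi\bigr)
\]
by a factor that vanishes with $h$ thanks to the hypothesis $\supd = o(h^2)$.

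\textbf{Step 1 (inverse bound for higher normal derivatives).}  Since every boundary edge $e$ of $\Oh$ is a straight segment, $\nuhx$ is constant on $e$, so $\dnh^j \Pn(\phi)$ is a polynomial on the adjacent element $K_e$. Applying the trace inequality \eqref{trace1} followed by the inverse bound \eqref{inversebase} (with $k=j$, $j=1$) to the polynomial $\Pn(\phi) \in \mathbb{P}_m(K_e)$ gives, for every $j\ge 1$,
\[
\| \dnh^j \Pn(\phi)\|_{0,e} \lesssim h^{-1/2}|\Pn(\phi)|_{j,K_e} + h^{1/2}|\Pn(\phi)|_{j+1,K_e} \lesssim h^{1/2-j}\,|\Pn(\phi)|_{1,K_e} \lesssim h^{1/2-j}\,|\phi|_{1,K_e},
\]
where in the last step I use that $\Pn$ is an $H^1$-seminorm orthogonal projection.

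\textbf{Step 2 (bounding $R$).} Combining the above with the analogous inverse bound \eqref{inverse0} for $\dnh\Pn(\psi)$, and using $\delta(x) \le \supd$ on $\partial\Oh$, each summand in $R$ is estimated by
\[
\biggl|\int_e \frac{\delta^j}{j!} \dnh^j\Pn(\phi) \bigl(\dnh\Pn(\psi)-\gamma h^{-1}\psi\bigr)\biggr| \lesssim \supd^{\,j}\, h^{1/2-j} |\phi|_{1,K_e}\bigl(h^{-1/2}|\psi|_{1,K_e} + h^{-1}\|\psi\|_{0,e}\bigr).
\]
Summing over $e\in\Eb$ and over $j=1,\dots,k$, and recognizing the factors $|\psi|_{1,K_e}$, $h^{-1/2}\|\psi\|_{0,e}$ as pieces of $\vvvert\psi\vvvert_{\Oh}$ (and similarly for $\phi$),
\[
|R(\phi,\psi)| \lesssim \sum_{j=1}^k \left(\frac{\supd}{h}\right)^{j} \vvvert \phi\vvvert_{\Oh}\, \vvvert\psi\vvvert_{\Oh}.
\]
By hypothesis $\supd/h = o(h) \to 0$, so the prefactor is uniformly bounded for $h\le 1$ and tends to $0$ as $h\to 0$.

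\textbf{Step 3 (conclusion).} Continuity \eqref{continuity} is obtained by writing $\Ah=\Bh - R$, applying \eqref{continuity0} of Lemma \ref{lem:coercivity_Nitsche} to $\Bh$, and adding the uniform bound on $|R|$. For coercivity \eqref{coercivity} on $V_h$, fix $\gamma>\gamma_0$ from Lemma \ref{lem:coercivity_Nitsche} and denote by $c_*(\gamma)>0$ the resulting coercivity constant for $\Bh$; with $\phi=\psi$,
\[
\Ah(\phi,\phi) \ge \Bh(\phi,\phi) - |R(\phi,\phi)| \ge \Bigl(c_*(\gamma) - C\,\frac{\supd}{h}\Bigr)\vvvert\phi\vvvert_{\Oh}^2,
\]
and it suffices to choose $h_0=h_0(\gamma)$ so small that $C\supd/h \le c_*(\gamma)/2$ for all $h<h_0$.

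\textbf{Main obstacle.} The only non-routine point is Step 1: controlling arbitrary-order normal derivatives of $\Pn(\phi)$ uniformly in the shape of the polygon by the $H^1$ seminorm alone. It is handled cleanly because $\Pn(\phi)$ is polynomial and inverse inequalities on $\mathbb{P}_m$ hold uniformly under Assumption \ref{shapereg}; the smallness of the correction then follows mechanically from $\supd = o(h^2)$, which is the reason the threshold $k=\lfloor m/2\rfloor$ never enters the bound.
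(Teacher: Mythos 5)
Your proof is correct and follows essentially the same route as the paper: the key inverse bound $\| \dnh^j \Pn(\phi)\|_{0,e} \lesssim h^{1/2-j}|\phi|_{1,K_e}$ is exactly the paper's inequality \eqref{inverse}, and the correction term is controlled by $\supd/h = o(h)$ in both arguments. The only cosmetic difference is that you invoke the coercivity of $\Bh$ from Lemma \ref{lem:coercivity_Nitsche} as a black box, while the paper re-runs the $\varepsilon$-Young step inline to track the $\gamma$-dependence of the constants explicitly; both yield the same conclusion.
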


\newcommand{\Ck}{\mathcal{C}_k}

\begin{proof}
	Let $\Ck$ be defined as
	\[
	\Ck(\phi,\psi) = \sum_{e\in \Eb }\int_e \left(
	\sum_{j=1}^k \frac{\delta^j}{j!} \dnh^j \phi
	\right) \psi 
	\]
so that we can write
	\[
	\Ah(\phi,\phi) = 
	\Bh(\phi,\psi) 
	- \Ck(\Pnphi,\dnh \Pnpsi - \gamma h^{-1} \psi).
	\]
		As, by definition, $\Pinabla (\phi)$ is a piecewise polynomial, using an inverse inequality and the continuity of the operator $\Pinabla$ we get that for $e \in \mathcal{E}^\partial$ and $\phi \in H^1(\Omega)$ we have
			\begin{equation}\label{inverse}
			\| \dnh^j \Pinabla( \phi )\|_{0,e} \lesssim h^{1/2-j} | \Pinabla (\phi) |_{1,K_e} \lesssim  h^{1/2-j} |  \phi |_{1,K_e},
			\end{equation}
	where, once again, $K_e$ is the unique element of $\Th$ having $e$ as an edge.	Since $\supd/h \lesssim 1$, using \eqref{inverse} we have
		\begin{multline}\label{boundCk1}
		| \Ck(\Pnphi,\psi) | \lesssim  \sum_{e\in \Eb } 
		\sum_{j=1}^k \left(\frac \supd h \right)^j h^j \| \dnh^j \Pnphi \|_{0,e} \|  \psi \|_{0,e}   \lesssim \frac \supd h
				 | \phi |_{1,\Oh} h^{1/2}  \| \psi \|_{0,e} 
		\end{multline}
		as well as
	\begin{multline}\label{boundCk2}
		| \Ck(\Pnphi,\dnh \Pnpsi) | \lesssim  \sum_{e\in \Eb } 
		\sum_{j=1}^k \left(\frac  \supd h \right)^j h^j \| \dnh^j \Pnphi \|_{0,e} \|  \dnh \Pnpsi \|_{0,e}  
		 \lesssim \frac \supd h | \phi |_{1,\Oh} | \psi |_{1,\Oh}. 
	\end{multline}
		Combining with \eqref{continuity0} the bound \eqref{continuity} easily follows.
Let us now consider \eqref{coercivity}.	
Combining \eqref{coercivity0} with \eqref{boundCk1} and \eqref{boundCk2} we obtain, for $\varepsilon >0$ arbitrary and $c_1$, $c_2$ and $c_3$ fixed positive constants,
	\begin{multline*}
	\Ah(\phi,\phi) = 
	\Bh(\phi,\phi) 
	- \Ck(\Pnphi,\dnh \Pnphi) + \gamma h^{-1} \Ck(\Pnphi,\phi)\\ \gtrsim  (1 - c_1  \varepsilon- c_2 \frac  \supd h - c_3 \frac  \supd h \gamma) | \phi |_{1,\Oh}^2 +( \gamma - \frac {c_1} \epsilon - c_3 \gamma \frac \supd h )  h^{-1} \| \phi \|^2_{0,\partial \Oh} .
	\end{multline*}

	We now choose $\varepsilon = 1/(2c_1)$ and we fix $\gamma_0$ in such a way that $\gamma_0 - c_1/\varepsilon > 0$. For $\gamma > \gamma_0$, set now $\alpha = \gamma - c_1/\varepsilon > 0$. As $ \supd = o(h^2)$, we can choose $h_0$ in such a way that for all $h < h_0$,  $c_2 \supd/h + \gamma c_3 \supd/h < 1/2$ and $\gamma c_3  \supd/h < \alpha$. The thesis easily follows.
\end{proof}

Once again, existence and uniqueness of the solution of (\ref{problem_BDT}) easily follow. Moreover, the error estimate given by the following Theorem holds.

\begin{theorem}\label{main_theo} If $u\in H^{s}(\Omega)\cap W^{k+1,\infty} (\Omega)$, with $k+1 < s \leq m+1$, for $h< h_0$ and $\gamma > \gamma_0$ ($\gamma >0$ and $h_0 >0$ given by Lemma \ref{lem:coercivity}) the following error estimate holds
	\[
	\vvvert u  - u_h \vvvert_{\Oh} \lesssim h^{s-1} | u |_{s,\Omega} + h^{-1/2} \delta^{k+1} \| u \|_{k+1,\infty,\Omega}.
	\]
\end{theorem}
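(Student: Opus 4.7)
The plan is to mimic the proof of Theorem \ref{theo:error_Nitsche}, replacing the coercivity of $\Bh$ with that of $\Ah$ provided by Lemma \ref{lem:coercivity}, while carefully handling two new sources of error: the Taylor correction terms built into $\Ah$, and the Taylor truncation remainder arising from transferring the boundary data from $\partial\Omega$ to $\partial\Oh$.

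Setting $d_h = u_I - u_h$, Lemma \ref{lem:coercivity} gives $\vvvert d_h \vvvert_{\Oh}^2 \lesssim \Ah(u_I, d_h) - \Ah(u_h, d_h)$, and I would substitute the discrete equation \eqref{problem_BDT} into the second term. Since $\Omega$ is convex and $\Oh \subseteq \Omega$, the PDE $-\Delta u = f$ holds on $\Oh$, so Green's formula yields $a(u, d_h) = \int_{\Oh} f d_h + \langle \dnh u, d_h \rangle$, where $\langle \cdot, \cdot \rangle$ sums edge integrals over $\Eb$. Next, for $x \in \partial\Oh$, the segment from $x$ to $x+\delta(x)\nu_h(x) \in \partial\Omega$ lies in $\Omega$ by convexity, so Taylor's theorem applied to $u$ along $\nu_h$ gives
\[
\tg(x) = u(x) + \sum_{j=1}^{k} \frac{\delta(x)^j}{j!}\, \dnh^j u(x) + R_k(x), \qquad |R_k(x)| \lesssim \delta^{k+1}\|u\|_{k+1,\infty,\Omega}.
\]

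Inserting both identities and collecting as in the proof of Theorem \ref{theo:error_Nitsche}, I would decompose the right hand side as $E_1 + E_2 + E_3 + E_4 + E_5 + E_6$, where $E_1, \ldots, E_4$ are the exact analogues of the four terms estimated in Theorem \ref{theo:error_Nitsche} (with $\Oh$ in place of $\Omega$) and are bounded by $h^{s-1}|u|_{s,\Oh}\vvvert d_h \vvvert_{\Oh}$; the new term
\[
E_5 = \sum_{e\in\Eb}\int_e \Bigl(\sum_{j=1}^{k}\frac{\delta^j}{j!}\dnh^j(u - \Pn u_I)\Bigr)(\dnh \Pn d_h - \gamma h^{-1} d_h)
\]
arises from matching the Taylor correction inside $\Ah$ against the exact Taylor expansion of $\tg$; and $E_6 = \langle R_k, \dnh \Pn d_h - \gamma h^{-1} d_h \rangle$ collects the remainder contributions.

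To control $E_5$, I would split $u - \Pn u_I = (u - u_\pi) + \Pn(u_\pi - u_I)$, using \eqref{VEMapproxpi} together with the trace inequality \eqref{trace1} on the first part, and the inverse bound \eqref{inverse} together with \eqref{VEMapproxI} on the second, obtaining $\|\dnh^j(u - \Pn u_I)\|_{0,e} \lesssim h^{s-j-1/2}|u|_{s,K_e}$; since $\supd = o(h^2)$, the geometric sum $\sum_{j=1}^{k}(\supd/h)^j$ is controlled by $\supd/h$, giving $E_5 \lesssim h^{s-2}\supd|u|_{s,\Oh}\vvvert d_h \vvvert_{\Oh}$, which is $o(h^s)$ and hence absorbed into the leading $h^{s-1}$ rate. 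For $E_6$, edgewise Cauchy--Schwarz together with $\|R_k\|_{0,e}\lesssim \delta^{k+1}h^{1/2}\|u\|_{k+1,\infty,\Omega}$ and the inverse bound \eqref{inverse} yield $E_6 \lesssim h^{-1/2}\delta^{k+1}\|u\|_{k+1,\infty,\Omega}\vvvert d_h \vvvert_{\Oh}$. Dividing by $\vvvert d_h \vvvert_{\Oh}$ and adding the interpolation bound $\vvvert u - u_I \vvvert_{\Oh}\lesssim h^{s-1}|u|_{s,\Oh}$ via the triangle inequality delivers the thesis. The delicate point is the bookkeeping of $E_5$: verifying that the newly introduced polynomial-projected Taylor correction does not spoil the optimal $h^{s-1}$ rate relies crucially on the geometric smallness $\supd = o(h^2)$.
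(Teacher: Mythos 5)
Your proposal is correct and follows essentially the same route as the paper's proof: the same coercivity-plus-consistency decomposition inherited from Theorem \ref{theo:error_Nitsche}, the same splitting $u-\Pn u_I=(u-u_\pi)+\Pn(u_\pi-u_I)$ with \eqref{trace1}, \eqref{inverse}, \eqref{VEMapproxI} and \eqref{VEMapproxpi} to handle the Taylor-correction term (your retained factor $\supd/h$ is a slightly sharper but inessential refinement of the paper's bound), and the same Taylor-remainder estimate for the boundary-data transfer yielding the $h^{-1/2}\delta^{k+1}\|u\|_{k+1,\infty,\Omega}$ contribution. The only discrepancy is cosmetic bookkeeping (the paper splits the standard Nitsche contributions into five terms rather than four), which does not affect the argument.
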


\begin{proof} Let $u_I$ denote the usual VEM interpolant. 
	Setting $d_h = u_I-u_h$ and proceeding as in the proof of Theorem \ref{theo:error_Nitsche}, we have
	\begin{multline}
	\vvvert u_I - u_h \vvvert^2_{\Oh} \lesssim \Ah(u_I, d_h) - \Ah(u_h,d_h) =	\\	  \sum_{K\in\Th} a_h^K(u_I-u_\pi,d_h) + \sum_K a^K(u_\pi - u,d_h) +  a(u,d_h) 	- \langle \dnh u, d_h \rangle +\\ 
 \langle \dnh(u - \Pn (u_I)),d_h \rangle - \langle u , \dnh \Pn (d_h)	\rangle + \langle u - u_I, \dnh  \Pn (d_h)	 \rangle  +  \gamma h^{-1}\langle u,d_h \rangle 
	\\+ \gamma h^{-1} \langle u_I-u,d_h \rangle 	- \Ck(u,\dnh \Pn(d_h) - \gamma h^{-1} d_h ) + \\
	+\Ck (u - \Pn(u_I), \dnh \Pn(d_h) - \gamma h^{-1} d_h)  - \int_{\Oh} f d_h + \langle \tg , \dnh \Pn(d_h) - \gamma h^{-1} d_h \rangle \\= E1 + E2 + E3 + E4  + E5 + E6 + E7 
\end{multline}			
	with \(E1\), \(E2\), \(E3\), \(E4\),  and \(E5 \) as in the proof of Theorem   \ref{theo:error_Nitsche}
	and with
	\begin{align*} E6 = \Ck (u - \Pn(u_I), \dnh \Pn(d_h) - \gamma h^{-1}d_h), 
	\\
	 E7 = \langle \tg  - \sum_{j=0}^{k} \frac {\delta^j}{j!} \dnh^j u, \dnh\Pn(d_h) - \gamma h^{-1} d_h \rangle.
	\end{align*}
The components $E1, E2, E3, E4, E5$ can be bounded exactly as in Theorem \ref{theo:error_Nitsche}. 
Let us then bound  the last two terms $E6$ and $E7$.
	Since $\delta = o(h^2) \lesssim h$, we have
	\[
	E6 \lesssim \sum_{e \in \Eb} \sum_{j=1}^k h^j \| \dnh^j (u - \Pn(u_I)) \|_{0,e} (\| \dnh \Pn(d_h)  \|_{0,e} + h^{-1} \| d_h \|_{0,e}).
	\]
	Now, using \eqref{trace1} as well as \eqref{inverse} we have
	\begin{multline*}
	\| \dnh^j (u - \Pn(u_I)) \|_{0,e}  \lesssim 
	\| \dnh^j (u - u_\pi) \|_{0,e} + \| \dnh^j (\Pn(u_\pi - u_I)) \|_{0,e} 
\\	\lesssim h^{-1/2} | u - u_\pi |_{j,K_e} + h^{1/2} | u - u_\pi |_{j+1,K_e} + h^{1/2-j} | u_\pi - u_I |_{1,K_e} \lesssim
h^{s-j-1/2} | u |_{s,K_e},
\end{multline*}
where $| u_\pi - u_I |_{1,K_e}$ is bound by adding and subtracting $u$ and using the VEM approximation bounds \eqref{VEMapproxI} and \eqref{VEMapproxpi}, yielding
\[
E6 \lesssim h^{s-1} | u |_{s,\Omega} \vvvert d_h \vvvert_{\Oh}.
\]

\

	Finally, $E7$ takes into account the approximation of the curved boundary by projection, and, following the paper by Thom\'ee, it can be bound as
	\begin{multline*}
	E7 \lesssim  \sum_{e \in \Eb} \| \tg  - \sum_{j=0}^{k} \frac {\delta^j}{j!} \dnh^j u \|_{0,e} (\|  \dnh\Pn(d_h) \|_{0,e}	+ h^{-1} \| d_h \|_{0,e} )
	 \lesssim\\
	h^{-1/2} \delta^{k+1}\| u \|_{k+1,\infty,\Oh} \vvvert d_h \vvvert_{\Oh}.	
	\end{multline*}
	Assembling the bounds for the seven terms we finally obtain
	\[
\vvvert \delta_h \vvvert_{\Oh}^2 \lesssim	h^{s-1} | u |_{s,\Oh} \vvvert \delta_h \vvvert_{\Oh} + h^{-1/2} \delta^{k+1} \| u \|_{k+1,\infty,\Omega}  \vvvert \delta_h \vvvert_{\Oh}.
	\]
	Dividing both sides by $\vvvert d_h \vvvert_{\Oh}$ and using a triangular inequality we get the thesis.
\end{proof}

As $\delta = o(h^2)$ and since we set $k = \lfloor m/2 \rfloor$ so that $h^{-1/2}\delta^{k+1} \lesssim h^{2 \lfloor m/2 \rfloor + 3/2} \lesssim h^m$ we have the following corollary.
	\begin{corollary}\label{cor:dh2}
Under the assumptions of Theorem \ref{main_theo}, if $u \in H^{m+1}(\Omega)$ then 
\[
\vvvert u - u_h \vvvert_{\Oh} \lesssim h^m \| u \|_{m+1,\Omega}.
\]
\end{corollary}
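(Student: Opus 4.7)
The corollary is essentially a book-keeping consequence of Theorem \ref{main_theo} once one plugs in the maximum regularity exponent $s = m+1$ and expresses the boundary-correction remainder in terms of $h$. My plan is therefore to apply Theorem \ref{main_theo} directly and then to argue that both terms on the right-hand side are bounded by $h^m \| u \|_{m+1,\Omega}$.

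First, I would take $s = m+1$ in the statement of Theorem \ref{main_theo}, which is legitimate since $u \in H^{m+1}(\Omega)$ by assumption and since $k+1 = \lfloor m/2 \rfloor + 1 \leq m+1 = s$. The first term then becomes $h^{s-1} |u|_{s,\Omega} = h^m |u|_{m+1,\Omega}$, which is already of the desired order. So the only work is to show that the boundary-projection remainder $h^{-1/2} \delta^{k+1} \|u\|_{k+1,\infty,\Omega}$ is also $O(h^m)$.

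For this, I would use the geometric property $\delta_h = o(h^2)$ recalled in (\ref{delta_h}), which for $h$ small enough yields $\delta \lesssim h^2$, and hence $\delta^{k+1} \lesssim h^{2(k+1)}$. Combining with the $h^{-1/2}$ factor gives $h^{-1/2} \delta^{k+1} \lesssim h^{2k + 3/2}$. Plugging in $k = \lfloor m/2 \rfloor$, one checks by cases that $2\lfloor m/2 \rfloor + 3/2 \geq m$ both when $m$ is even (the exponent becomes $m + 3/2$) and when $m$ is odd (the exponent becomes $m + 1/2$). Since $h \leq 1$, this gives $h^{2k+3/2} \leq h^m$, as claimed.

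The final point is to replace the $W^{k+1,\infty}(\Omega)$ norm appearing in Theorem \ref{main_theo} with the $H^{m+1}(\Omega)$ norm that is available in the corollary. In dimension two, the Sobolev embedding $H^{m+1}(\Omega) \hookrightarrow W^{k+1,\infty}(\Omega)$ holds whenever $m+1 - 1 > k+1$, i.e.\ whenever $m > \lfloor m/2 \rfloor + 1$, which is true for all $m \geq 3$; for $m = 1, 2$ the correction term is either absent or controlled by the boundary regularity already built into $u \in H^{m+1}$. Consequently $\|u\|_{k+1,\infty,\Omega} \lesssim \|u\|_{m+1,\Omega}$, and assembling the two bounds yields the stated estimate. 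I do not anticipate any real obstacle here: the whole argument is a short exponent computation plus one invocation of Sobolev embedding, all the analytic content having already been concentrated in Theorem \ref{main_theo}.
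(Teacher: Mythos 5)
Your argument is correct and takes essentially the same route as the paper, whose entire justification for the corollary is the exponent computation $h^{-1/2}\delta^{k+1}\lesssim h^{2\lfloor m/2\rfloor+3/2}\lesssim h^{m}$ given in the sentence immediately preceding its statement. Your extra step of converting $\|u\|_{k+1,\infty,\Omega}$ into $\|u\|_{m+1,\Omega}$ by Sobolev embedding addresses a point the paper passes over silently; it is a sensible addition, even though your treatment of the borderline cases $m=1,2$ (where $H^{m+1}(\Omega)\not\hookrightarrow W^{k+1,\infty}(\Omega)$ in two dimensions) remains informal and in those cases one really just retains the $W^{k+1,\infty}$ regularity already assumed in Theorem \ref{main_theo}.
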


\newcommand{\map}{T}

\begin{remark} While, for the sake of simplicity, we assumed $\Omega$ to be convex,  our reasoning can be carried out to more general situations, provided $d_h = o(h)$. In particular, for non convex domains, Theorem \ref{main_theo} holds, provided $\Oh \subseteq \Omega$ (which, if $\Omega$ is not convex, requires to give up the assumption that the boundary vertices of $\Th$ belong to $\partial \Omega$). For an extension of the method where this assumption is relaxed, see \cite{Dupont_noconvex}.
\end{remark}

\begin{remark} In defining the method, we set $k = \lfloor m/2 \rfloor$. Of course, it is possible to choose other values for the parameter $k$ (for instance, if the condition $d_h=o(h^2)$ is not satisfied). Observe that, in the finite element case, the choice $k=m$ leads to the following discrete equation:
	\begin{align*}
	a(u_h,v_h) - \int_{\partial\Oh}\dnh u_h v_h - \int_{\partial\Oh}u^\star_h (\dnh v_h - \gamma h^{-1}v_h) = \\\int_{\Oh} f v_h - \int_{\partial \Oh} \tg  (\dnh v_h - \gamma h^{-1}v_h)
	\end{align*}
	where, for $x \in e \subset \Oh$, $u^\star_h(x) = p(x + \delta(x)\nuhx(x))$, $p$ being the polynomial in $\mathbb{P}_m$ such that $u_h = p$ in $K_e$ ($K_e$ denoting the only triangular element having $e$ as an edge). In our case we could expect that a similar property holds where $p$ is replaced by $\Pi^\nabla(u_h)$. However, this is not the case (at least, not exactly).  In fact, for $k = m$, if we rewrite \eqref{problem_BDT} in such a way to single out $\Pi^\nabla (u_h)^\star(x) = \Pi^\nabla_{K_e}(u_h)(x + \delta(x)\nuhx(x))$ we get
\begin{multline*}
 a_h(u_h,v_h) - \langle \dnh \Pn (u_h),v_h \rangle  \\ - \langle \Pi^\nabla(u_h)^\star, \dnh \Pn(v_h) - \gamma h^{-1} v_h \rangle+ \langle \Pn(u_h) - u_h , \dnh \Pn(v_h) - \gamma h^{-1} v_h  \rangle \\= \int_{\Oh} f v_h - \langle \tg, \dnh \Pn(v_h) - \gamma h^{-1} v_h \rangle,
	\end{multline*}
	which contains an extra term measuring the discrepancy between $u_h$ and $\Pi^\nabla(u_h)$ on $\partial\Oh$.
	\end{remark}


\section{Numerical Tests}\label{sec:expes}
In this section we present three different sets of numerical experiments, aimed at testing  and validating the proposed virtual element method for curved domains.
More precisely we deal with the following three different test cases, for each of which the right hand side $f$ and the boundary data $g$ are chosen in such a way that the solution to our model problem is the one given by, respectively, \eqref{sol1}, \eqref{sol2} and \eqref{sol3}.
\begin{enumerate}[\underline{Test} 1.]
\item $\Omega = \{(x,y)\in\mathbb R^2 | x^2 + y^2 \leq 1\}$.
 The analytic solution is given by
\begin{equation}\label{sol1}
u(x,y) = \cos(4\pi\sqrt{x^2+y^2}).
\end{equation}
\item $\Omega$ is the region bounded by the polar curve $x(\theta) = r(\theta)\cos(\theta), y(\theta) = r(\theta)\sin(\theta)$, with $r(\theta) = 2 + \sin(9\theta), \theta \in[0,2\pi]$.
 The analytic solution is given by
\begin{equation}\label{sol2}
u(x,y) = \sinc(2.25\sqrt{x^2+y^2})\cos(6.75\pi\sqrt{x^2+y^2}).
\end{equation}
\item $\Omega$ is the region bounded by the following curves:
\begin{align*}
&x(\theta) = r(\theta)\cos(\theta),\ y(\theta) = r(\theta)\sin(\theta), \text{ with } r(\theta) = \sqrt{\theta}, \quad\theta\in[\pi/2, 8\pi],\\
&x(\theta) = r(\theta)\cos(\theta),\ y(\theta) = r(\theta)\sin(\theta), \text{ with } r(\theta) = 0.9\sqrt{\theta}, \quad\theta\in[\pi/2, 8\pi],\\
&\{0\}\times[0.9\sqrt{\pi/2},\ \sqrt{\pi/2}], [0.9\sqrt{8\pi}, \sqrt{8\pi}]\times\{0\}. \\
\end{align*}
The analytic solution is given by 
\begin{equation} 	\label{sol3}
u(x,y) = \frac{\sin(32\tan^{-1}(y/x)) \cos(96\tan^{-1}(y/x))}{\sqrt{x^2 + y^2}}.
\end{equation}
\end{enumerate}
Figures  \ref{fig:circle_meshes}, \ref{fig:flower_meshes} and \ref{fig:spiral_meshes} display the three different domains considered (left)  and the computed VEM solution for $m=1$ (right).

\

Observe that, while the first of the three test cases falls under the assumptions under which we proved our  theoretical estimate, this is not the case for the second and third test cases, for both of which the domain $\Omega$ is not convex. In all three cases, the tessellations $\Th$ consist in quasi uniform shape regular Voronoi decompositions of the domain considered. As the grids are not structured, we choose to define the mesh size parameter as $h = N_V^{-1/2}$, where $N_V$ is the number of vertices of the tessellation. 

\

Letting $u_h^m$ denote the discrete solution obtained by the order $m$ VEM method proposed in the previous section, for all the three tests we consider the  relative error in the energy norm, as well as in the $L^2(\Oh)$ norm
\begin{align}\label{eS}
e^S_m &:= \frac{\vvvert u - u^m_h \vvvert_{\Oh}}{\vvvert u \vvvert_{\Oh}}, & e^{L^2} &:= \frac{|| u - u^m_h ||_{0,\Oh}}{|| u ||_{0,\Oh}}.
\end{align}

Tables \ref{tab:circle_meshes},  \ref{tab:flower_meshes} and \ref{tab:spiral_meshes} report $e^S_m$ for the three test cases, for $m=[1,\dots,6]$.  We also display in Figures \ref{fig:circle_error}, \ref{fig:flower_error}, \ref{fig:spiral_error} a logarithmic plot of the energy norm error $e^S_m$  and the $L^2$ norm $e^{L^2}_m$ as a function of the number of degrees of freedom $N_\text{DoFs}$ (which, we recall, is asymptotically proportional to $N_V$). 
The plots also show the approximate asymptotic convergence estimate obtained by plotting the functions $N_\text{DoFs}^{-m/2} \simeq h^{m}$. 

\

The numerical results for Test 1 are in agreement with the theoretical estimates. To test the robustness of the method we considered, in Tests 2 and 3, domains which are not convex; nevertheless the numerical results  are also in agreement with the theory and the predicted convergence rate of Corollary \ref{cor:dh2} is attained, see Tables \ref{tab:flower_meshes}, \ref{tab:spiral_meshes} and Figures  \ref{fig:flower_error} and \ref{fig:spiral_error}.

\begin{figure}[htb]
	\centering
	\subfloat{\includegraphics[width=0.45\textwidth]{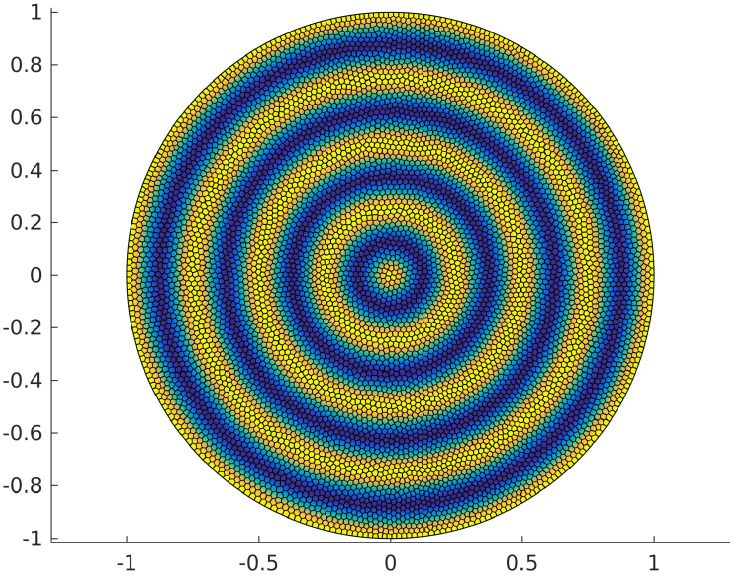}}
	\subfloat{\includegraphics[width=0.45\textwidth]{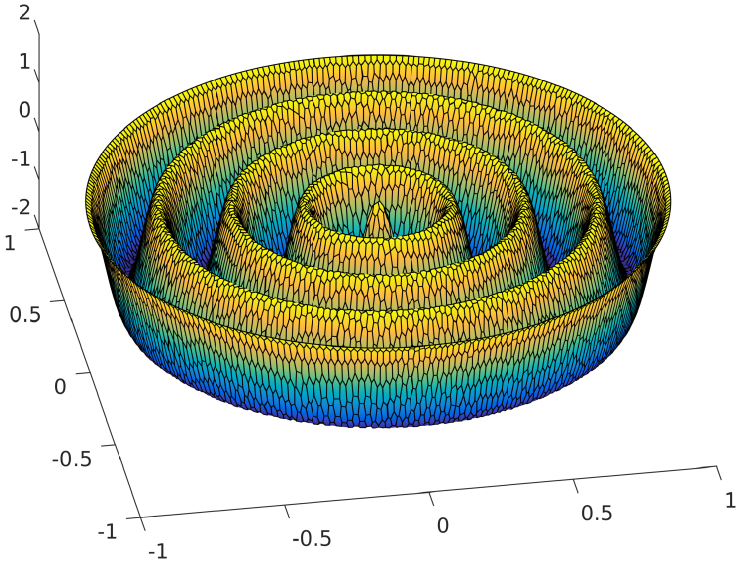}}
	\caption{First test case: (left) example mesh; (right) solution computed on the example mesh for $m = 1$.}
	\label{fig:circle_meshes}
\end{figure}

\begin{figure}[htb]
	\centering
	\subfloat{\includegraphics[width=0.45\textwidth]{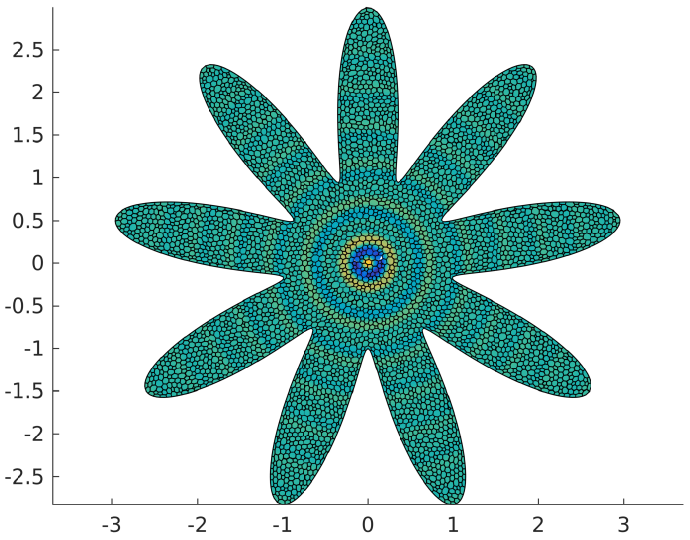}}
	\subfloat{\includegraphics[width=0.45\textwidth]{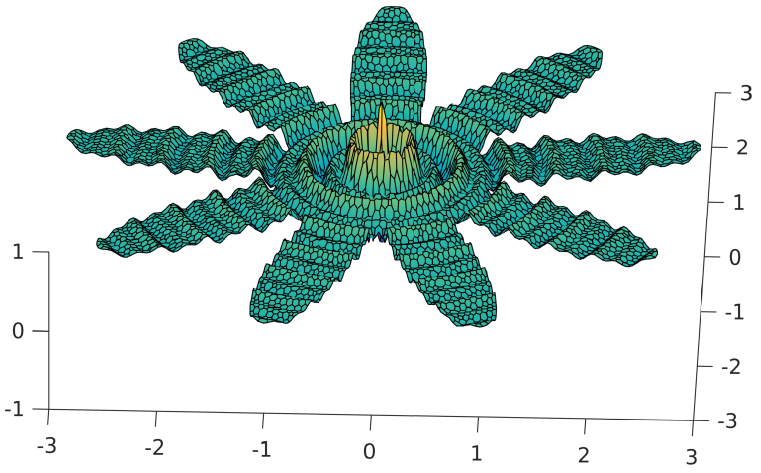}}
	\caption{Second test case: (left) example mesh; (right) solution computed on the example mesh for $m = 1$.}
	\label{fig:flower_meshes}
\end{figure}

\begin{figure}[htb]
	\centering
	\subfloat{\includegraphics[width=0.45\textwidth]{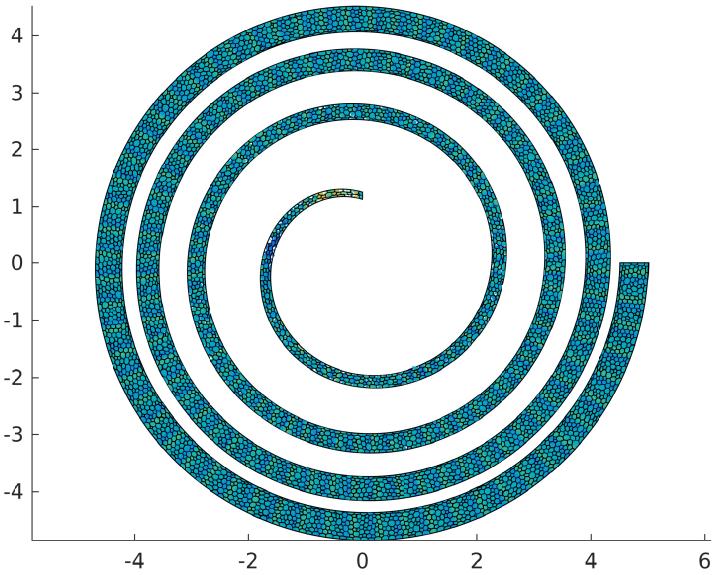}}
	\subfloat{\includegraphics[width=0.45\textwidth]{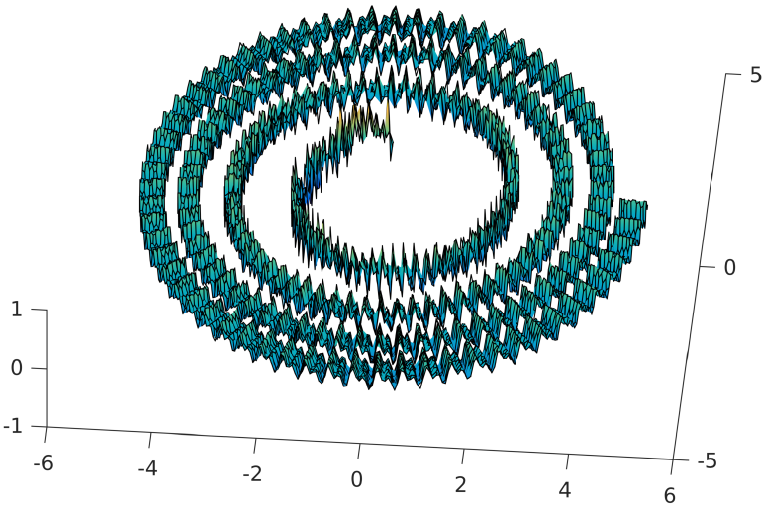}}
	\caption{Third test case: (left) example mesh; (right) solution computed on the example mesh for $m = 1$.}
	\label{fig:spiral_meshes}
\end{figure}


\begin{table}
\centering
\caption{First test case: relative errors in the energy norm for $m = 1,\dots,6.$}
\begin{tabular}{
c
S[table-format=1.{\roundPrecision}e-1]
S[table-format=1.{\roundPrecision}e-1]
S[table-format=1.{\roundPrecision}e-1]
S[table-format=1.{\roundPrecision}e-1]
S[table-format=1.{\roundPrecision}e-1]
S[table-format=1.{\roundPrecision}e-1]
S[table-format=1.{\roundPrecision}e-1]
}
\toprule
{Mesh} & {$h$} & {$e_1^S$} & {$e_2^S$} & {$e_3^S$} & {$e_4^S$} & {$e_5^S$} & {$e_6^S$}\\
\midrule
u-circle$_{1}$      &   4.42807e-02  &   5.545710e-02   &   2.193015e-02   &   1.199346e-02   &   5.036405e-03   &   7.509974e-04   &   1.789800e-04\\
u-circle$_{2}$      &   3.13728e-02   &   3.603052e-02   &   9.934135e-03   &   2.988500e-03   &   1.357457e-03   &   1.507302e-04   &   2.199918e-05\\
u-circle$_{3}$      &   2.21567e-02   &   2.088402e-02   &   3.915290e-03   &   7.734057e-04   &   3.160988e-04   &   2.343400e-05   &   2.404728e-06\\
u-circle$_{4}$      &   1.56671e-02   &   1.163849e-02   &   1.562648e-03   &   2.158015e-04   &   7.732008e-05   &   3.915458e-06   &   2.752169e-07\\
u-circle$_{5}$      &   1.10661e-02   &   6.347911e-03   &   6.028228e-04   &   5.491097e-05   &   1.587730e-05   &   5.857860e-07   &   2.961757e-08\\
u-circle$_{6}$      &   7.83188e-03   &   3.512339e-03   &   2.413337e-04   &   1.549042e-05   &   3.451427e-06   &   8.877508e-08   &   3.169268e-09\\
u-circle$_{7}$      &   5.53687e-03   &   2.022206e-03   &   9.817060e-05   &   4.342566e-06   &   7.429857e-07   &   1.353072e-08   &   3.398872e-10\\
u-circle$_{8}$      &   3.91405e-03   &   1.150340e-03   &   4.012099e-05   &   1.214472e-06   &   1.539748e-07   &   2.002459e-09   &   3.531635e-11\\
\bottomrule
\end{tabular}
\label{tab:circle_meshes}
\end{table}

\begin{figure}[htb]
\centering
\resizebox{\textwidth}{!}{
\begin{tabular}{rl}

\begin{tikzpicture}[trim axis left]

\begin{loglogaxis}[
legend style={
	font=\tiny},
legend pos=south west
]

\addplot [thick, solid, color=red, mark=o,mark options={scale=1.5}] coordinates {
(510, 0.0554571)
(1016, 0.0360305)
(2037, 0.020884)
(4074, 0.0116385)
(8166, 0.00634791)
(16303, 0.00351234)
(32619, 0.00202221)
(65275, 0.00115034)
};
\addlegendentry{$e_1^S$}

\addplot [thick, solid, color=blue, mark=+,mark options={scale=1.5}] coordinates {
(1531, 0.0219301)
(3055, 0.00993413)
(6121, 0.00391529)
(12243, 0.00156265)
(24523, 0.000602823)
(48989, 0.000241334)
(98005, 9.81706e-05)
(196085, 4.0121e-05)
};
\addlegendentry{$e_2^S$}

\addplot [thick, dashed, color=red, mark=o,mark options={scale=1.5,solid}] coordinates {
(510, 0.0442807)
(1016, 0.0313728)
(2037, 0.0221567)
(4074, 0.0156671)
(8166, 0.0110661)
(16303, 0.00783188)
(32619, 0.00553687)
(65275, 0.00391405)
};
\addlegendentry{$N_\text{DoFs}^{-1/2}$}

\addplot [ thick, dashed, color=blue, mark=+,mark options={scale=1.5,solid}] coordinates {
(1531, 0.000653168)
(3055, 0.000327332)
(6121, 0.000163372)
(12243, 8.16793e-05)
(24523, 4.0778e-05)
(48989, 2.04127e-05)
(98005, 1.02036e-05)
(196085, 5.09983e-06)
};
\addlegendentry{$N_\text{DoFs}^{-1}$}

\end{loglogaxis}
\end{tikzpicture}

&

\begin{tikzpicture}[trim axis right]
\begin{loglogaxis}[
legend style={font=\tiny},
legend pos=south west
]

\addplot [thick, solid, color=red, mark=o,mark options={scale=1.5}] coordinates {
(510, 0.424854)
(1016, 0.244116)
(2037, 0.122332)
(4074, 0.0614129)
(8166, 0.0308882)
(16303, 0.0153813)
(32619, 0.00772965)
(65275, 0.00384396)
};
\addlegendentry{$e_1^{L^2}$}

\addplot [thick, solid, color=blue, mark=+,mark options={scale=1.5}] coordinates {
(1531, 0.118358)
(3055, 0.043115)
(6121, 0.0154098)
(12243, 0.00553022)
(24523, 0.00194552)
(48989, 0.000688025)
(98005, 0.00024328)
(196085, 8.55962e-05)
};
\addlegendentry{$e_2^{L^2}$}

\addplot [thick, dashed, color=red, mark=o,mark options={scale=1.5,solid}] coordinates {
(510, 0.00196078)
(1016, 0.000984252)
(2037, 0.000490918)
(4074, 0.000245459)
(8166, 0.000122459)
(16303, 6.13384e-05)
(32619, 3.0657e-05)
(65275, 1.53198e-05)
};
\addlegendentry{$N_\text{DoFs}^{-1}$}

\addplot [thick, dashed, color=blue, mark=+,mark options={scale=1.5,solid}] coordinates {
(1531, 1.66931e-05)
(3055, 5.9222e-06)
(6121, 2.08817e-06)
(12243, 7.3819e-07)
(24523, 2.60399e-07)
(48989, 9.22257e-08)
(98005, 3.25932e-08)
(196085, 1.15168e-08)
};
\addlegendentry{$N_\text{DoFs}^{-3/2}$}

\end{loglogaxis}

\end{tikzpicture}

\\

\begin{tikzpicture}[trim axis left]

\begin{loglogaxis}[
legend style={font=\tiny},
legend pos=south west
]

\addplot [thick, solid, color=darkpastelgreen, mark=o,mark options={scale=1.5}] coordinates {
(2808, 0.0119935)
(5606, 0.0029885)
(11229, 0.000773406)
(22460, 0.000215801)
(44976, 5.4911e-05)
(89867, 1.54904e-05)
(179775, 4.34257e-06)
(359663, 1.21447e-06)
};
\addlegendentry{$e_3^S$}

\addplot [thick, solid, color=orange, mark=+,mark options={scale=1.5,solid}] coordinates {
(4341, 0.0050364)
(8669, 0.00135746)
(17361, 0.000316099)
(34725, 7.73201e-05)
(69525, 1.58773e-05)
(138937, 3.45143e-06)
(277929, 7.42986e-07)
(556009, 1.53975e-07)
};
\addlegendentry{$e_4^S$}

\addplot [thick, dashed, color=darkpastelgreen, mark=o,mark options={scale=1.5,solid}] coordinates {
(2808, 6.72054e-06)
(5606, 2.38243e-06)
(11229, 8.40405e-07)
(22460, 2.97088e-07)
(44976, 1.0484e-07)
(89867, 3.71193e-08)
(179775, 1.31192e-08)
(359663, 4.63614e-09)
};
\addlegendentry{$N_\text{DoFs}^{-3/2}$}

\addplot [thick, dashed, color=orange, mark=+,mark options={scale=1.5,solid}] coordinates {
(4341, 5.30665e-08)
(8669, 1.33064e-08)
(17361, 3.3178e-09)
(34725, 8.29307e-10)
(69525, 2.0688e-10)
(138937, 5.18041e-11)
(277929, 1.29459e-11)
(556009, 3.23472e-12)
};
\addlegendentry{$N_\text{DoFs}^{-2}$}

\end{loglogaxis}
\end{tikzpicture}

&

\begin{tikzpicture}[trim axis right]
\begin{loglogaxis}[
legend style={font=\tiny},
legend pos=south west
]

\addplot [thick, solid, color=darkpastelgreen, mark=o,mark options={scale=1.5,solid}] coordinates {
(2808, 0.0229876)
(5606, 0.00637068)
(11229, 0.00158441)
(22460, 0.000399461)
(44976, 0.000101019)
(89867, 2.52672e-05)
(179775, 6.40441e-06)
(359663, 1.58621e-06)
};
\addlegendentry{$e_3^{L^2}$}

\addplot [thick, solid, color=orange, mark=+,mark options={scale=1.5,solid}] coordinates {
(4341, 0.00465633)
(8669, 0.000940123)
(17361, 0.000179126)
(34725, 3.54532e-05)
(69525, 5.9435e-06)
(138937, 1.06954e-06)
(277929, 1.92518e-07)
(556009, 3.33234e-08)
};
\addlegendentry{$e_4^{L^2}$}

\addplot [thick, dashed, color=darkpastelgreen, mark=o,mark options={scale=1.5,solid}] coordinates {
(2808, 1.26825e-07)
(5606, 3.18195e-08)
(11229, 7.93082e-09)
(22460, 1.98235e-09)
(44976, 4.94354e-10)
(89867, 1.23822e-10)
(179775, 3.09415e-11)
(359663, 7.73052e-12)
};
\addlegendentry{$N_\text{DoFs}^{-2}$}

\addplot [thick, dashed, color=orange, mark=+,mark options={scale=1.5,solid}] coordinates {
(4341, 8.05426e-10)
(8669, 1.42915e-10)
(17361, 2.51804e-11)
(34725, 4.45035e-12)
(69525, 7.84599e-13)
(138937, 1.38981e-13)
(277929, 2.45564e-14)
(556009, 4.33806e-15)
};
\addlegendentry{$N_\text{DoFs}^{-5/2}$}

\end{loglogaxis}

\end{tikzpicture}

\\

\begin{tikzpicture}[trim axis left]

\begin{loglogaxis}[
legend style={font=\tiny},
xlabel={$N_\text{DoFs}$},
legend pos=south west
]

\addplot [thick, solid, color=cyan, mark=o,mark options={scale=1.5,solid}] coordinates {
(6130, 0.000750997)
(12244, 0.00015073)
(24517, 2.3434e-05)
(49038, 3.91546e-06)
(98170, 5.85786e-07)
(196199, 8.87751e-08)
(392467, 1.35307e-08)
(785123, 2.00246e-09)
};
\addlegendentry{$e_5^S$}

\addplot [thick, solid, color=olive, mark=+,mark options={scale=1.5,solid}] coordinates {
(8175, 0.00017898)
(16331, 2.19992e-05)
(32697, 2.40473e-06)
(65399, 2.75217e-07)
(130911, 2.96176e-08)
(261653, 3.16927e-09)
(523389, 3.39887e-10)
(1047005, 3.53163e-11)
};
\addlegendentry{$e_6^S$}

\addplot [thick, dashed, color=cyan, mark=o,mark options={scale=1.5,solid}] coordinates {
(6130, 3.39898e-10)
(12244, 6.02826e-11)
(24517, 1.06251e-11)
(49038, 1.87788e-12)
(98170, 3.31172e-13)
(196199, 5.86487e-14)
(392467, 1.03632e-14)
(785123, 1.83086e-15)
};
\addlegendentry{$N_\text{DoFs}^{-5/2}$}

\addplot [thick, dashed, color=olive, mark=+,mark options={scale=1.5,solid}] coordinates {
(8175, 1.83036e-12)
(16331, 2.29595e-13)
(32697, 2.86073e-14)
(65399, 3.57509e-15)
(130911, 4.4573e-16)
(261653, 5.58242e-17)
(523389, 6.97471e-18)
(1047005, 8.71272e-19)
};
\addlegendentry{$N_\text{DoFs}^{-3}$}

\end{loglogaxis}
\end{tikzpicture}

&

\begin{tikzpicture}[trim axis right]
\begin{loglogaxis}[
legend style={font=\tiny},
xlabel={$N_\text{DoFs}$},
legend pos=south west
]

\addplot [thick, solid, color=cyan, mark=o,mark options={scale=1.5,solid}] coordinates {
(6130, 0.000794506)
(12244, 0.000118774)
(24517, 1.46724e-05)
(49038, 1.9438e-06)
(98170, 2.35811e-07)
(196199, 2.95957e-08)
(392467, 3.75659e-09)
(785123, 4.65383e-10)
};
\addlegendentry{$e_5^{L^2}$}

\addplot [thick, solid, color=olive, mark=+,mark options={scale=1.5,solid}] coordinates {
(8175, 0.000180169)
(16331, 1.58643e-05)
(32697, 1.32507e-06)
(65399, 1.16375e-07)
(130911, 1.02872e-08)
(261653, 8.84295e-10)
(523389, 7.90879e-11)
(1047005, 1.48083e-11)
};
\addlegendentry{$e_6^{L^2}$}

\addplot [thick, dashed, color=cyan, mark=o,mark options={scale=1.5,solid}] coordinates {
(6130, 4.34129e-12)
(12244, 5.44791e-13)
(24517, 6.78575e-14)
(49038, 8.48012e-15)
(98170, 1.05697e-15)
(196199, 1.32407e-16)
(392467, 1.65421e-17)
(785123, 2.06627e-18)
};
\addlegendentry{$N_\text{DoFs}^{-3}$}

\addplot [thick, dashed, color=olive, mark=+,mark options={scale=1.5,solid}] coordinates {
(8175, 2.02438e-14)
(16331, 1.79662e-15)
(32697, 1.58206e-16)
(65399, 1.39798e-17)
(130911, 1.23192e-18)
(261653, 1.09134e-19)
(523389, 9.64082e-21)
(1047005, 8.5149e-22)
};
\addlegendentry{$N_\text{DoFs}^{-7/2}$}

\end{loglogaxis}

\end{tikzpicture}

\\

\end{tabular}
}
\caption{First test case: 
	logarithmic plot of the energy norm error $e^S_m$ for increasing polynomial order $m=1,\dots,6$ and of the functions $N_\text{DoFs}^{-m/2}$ (dashed lines). }
\label{fig:circle_error}
\end{figure}

%
\begin{table}
	\centering
	\caption{Second test case: relative errors in the energy norm for $m = 1,\dots,6.$}
	\begin{tabular}{
			c
			S[table-format=1.{\roundPrecision}e-1]
			S[table-format=1.{\roundPrecision}e-1]
			S[table-format=1.{\roundPrecision}e-1]
			S[table-format=1.{\roundPrecision}e-1]
			S[table-format=1.{\roundPrecision}e-1]
			S[table-format=1.{\roundPrecision}e-1]
			S[table-format=1.{\roundPrecision}e-1]
		}
		\toprule
		{Mesh} & {$h$} & {$e_1^S$} & {$e_2^S$} & {$e_3^S$} & {$e_4^S$} & {$e_5^S$} & {$e_6^S$}\\
		\midrule
		flower$_{1}$      &   1.404e-02   &   5.502791e-02   &   3.018235e-02   &   1.065531e-02   &   4.716900e-03   &   9.854027e-04   &   2.017915e-04\\
		flower$_{2}$      &   1.01321e-02   &   3.402489e-02   &   1.294966e-02   &   2.952840e-03   &   1.201981e-03   &   1.325007e-04   &   2.388022e-05\\
		flower$_{3}$      &   7.26222e-03   &   1.937636e-02   &   6.016640e-03   &   8.586167e-04   &   3.147605e-04   &   2.205693e-05   &   4.133147e-06\\
		flower$_{4}$      &   5.18978e-03   &   1.238208e-02   &   2.527811e-03   &   2.568508e-04   &   7.075906e-05   &   3.637972e-06   &   3.787329e-07\\
		flower$_{5}$      &   3.6997e-03   &   7.158825e-03   &   1.117230e-03   &   7.643617e-05   &   1.658088e-05   &   5.828411e-07   &   4.556623e-08\\
		flower$_{6}$      &   2.63071e-03   &   4.267117e-03   &   4.710592e-04   &   2.369251e-05   &   3.643843e-06   &   9.018944e-08   &   4.826993e-09\\
		flower$_{7}$      &   1.86668e-03   &   2.652647e-03   &   2.019633e-04   &   7.107940e-06   &   7.864263e-07   &   1.379449e-08   &   5.601901e-10\\
		\bottomrule
	\end{tabular}
	\label{tab:flower_meshes}
\end{table}

\begin{figure}[htb]
\centering
\resizebox{\textwidth}{!}{
\begin{tabular}{rl}

\begin{tikzpicture}[trim axis left]

\begin{loglogaxis}[
legend style={font=\tiny},
legend pos=south west
]

\addplot [thick, solid, color=red, mark=o,,mark options={scale=1.5,solid}] coordinates {
(5073, 0.0550279)
(9741, 0.0340249)
(18961, 0.0193764)
(37128, 0.0123821)
(73058, 0.00715883)
(144495, 0.00426712)
(286985, 0.00265265)
};
\addlegendentry{$e_1^S$}

\addplot [thick, solid, color=blue, mark=+,,mark options={scale=1.5,solid}] coordinates {
(14867, 0.0301824)
(28727, 0.0129497)
(56179, 0.00601664)
(110387, 0.00252781)
(217763, 0.00111723)
(431511, 0.000471059)
(858133, 0.000201963)
};
\addlegendentry{$e_2^S$}

\addplot [thick, dashed, color=red, mark=o, ,mark options={scale=1.5,solid}] coordinates {
(5073, 0.01404)
(9741, 0.0101321)
(18961, 0.00726222)
(37128, 0.00518978)
(73058, 0.0036997)
(144495, 0.00263071)
(286985, 0.00186668)
};
\addlegendentry{$N_\text{DoFs}^{-1/2}$}

\addplot [thick, dashed, color=blue, mark=+, ,mark options={scale=1.5,solid}] coordinates {
(14867, 6.72631e-05)
(28727, 3.48105e-05)
(56179, 1.78002e-05)
(110387, 9.05904e-06)
(217763, 4.59215e-06)
(431511, 2.31744e-06)
(858133, 1.16532e-06)
};
\addlegendentry{$N_\text{DoFs}^{-1}$}

\end{loglogaxis}
\end{tikzpicture}

&

\begin{tikzpicture}[trim axis right]
\begin{loglogaxis}[
legend style={font=\tiny},
legend pos=south west
]

\addplot [thick, solid, color=red, mark=o,,mark options={scale=1.5,solid}] coordinates {
(5073, 0.278327)
(9741, 0.128952)
(18961, 0.0654322)
(37128, 0.0334381)
(73058, 0.0168463)
(144495, 0.00841131)
(286985, 0.00417643)
};
\addlegendentry{$e_1^{L^2}$}

\addplot [thick, solid, color=blue, mark=+,,mark options={scale=1.5,solid}] coordinates {
(14867, 0.0571816)
(28727, 0.0206878)
(56179, 0.00784525)
(110387, 0.00272675)
(217763, 0.000997427)
(431511, 0.000349535)
(858133, 0.000121855)
};
\addlegendentry{$e_2^{L^2}$}

\addplot [thick, dashed, color=red, mark=o,,mark options={scale=1.5,solid}] coordinates {
(5073, 0.000197122)
(9741, 0.000102659)
(18961, 5.27398e-05)
(37128, 2.69339e-05)
(73058, 1.36878e-05)
(144495, 6.92065e-06)
(286985, 3.4845e-06)
};
\addlegendentry{$N_\text{DoFs}^{-1}$}

\addplot [thick, dashed, color=blue, mark=+, ,mark options={scale=1.5,solid}] coordinates {
(14867, 5.51652e-07)
(28727, 2.05383e-07)
(56179, 7.50998e-08)
(110387, 2.72661e-08)
(217763, 9.84065e-09)
(431511, 3.52787e-09)
(858133, 1.25796e-09)
};
\addlegendentry{$N_\text{DoFs}^{-3/2}$}

\end{loglogaxis}

\end{tikzpicture}

\\

\begin{tikzpicture}[trim axis left]

\begin{loglogaxis}[
legend style={font=\tiny},
legend pos=south west
]

\addplot [thick, solid, color=darkpastelgreen, mark=o,mark options={scale=1.5,solid}] coordinates {
(27022, 0.0106553)
(52336, 0.00295284)
(102526, 0.000858617)
(201712, 0.000256851)
(398292, 7.64362e-05)
(789788, 2.36925e-05)
(1571363, 7.10794e-06)
};
\addlegendentry{$e_3^S$}

\addplot [thick, solid, color=orange, mark=+, ,mark options={scale=1.5,solid}] coordinates {
(41538, 0.0047169)
(80568, 0.00120198)
(158002, 0.00031476)
(311103, 7.07591e-05)
(614645, 1.65809e-05)
(1219326, 3.64384e-06)
(2426675, 7.86426e-07)
};
\addlegendentry{$e_4^S$}

\addplot [thick, dashed, color=darkpastelgreen, mark=o, ,mark options={scale=1.5,solid}] coordinates {
(27022, 2.25125e-07)
(52336, 8.35217e-08)
(102526, 3.04613e-08)
(201712, 1.10383e-08)
(398292, 3.9783e-09)
(789788, 1.42474e-09)
(1571363, 5.07674e-10)
};
\addlegendentry{$N_\text{DoFs}^{-3/2}$}

\addplot [thick, dashed, color=orange, mark=+, ,mark options={scale=1.5,solid}] coordinates {
(41538, 5.79574e-10)
(80568, 1.54055e-10)
(158002, 4.00567e-11)
(311103, 1.03322e-11)
(614645, 2.64698e-12)
(1219326, 6.72605e-13)
(2426675, 1.69815e-13)
};
\addlegendentry{$N_\text{DoFs}^{-2}$}

\end{loglogaxis}
\end{tikzpicture}

&

\begin{tikzpicture}[trim axis right]
\begin{loglogaxis}[
legend style={font=\tiny},
legend pos=south west
]

\addplot [thick, solid, color=darkpastelgreen, mark=o,mark options={scale=1.5,solid}] coordinates {
(27022, 0.0125813)
(52336, 0.00291413)
(102526, 0.000729562)
(201712, 0.000190187)
(398292, 4.80743e-05)
(789788, 1.20552e-05)
(1571363, 2.99981e-06)
};
\addlegendentry{$e_3^{L^2}$}

\addplot [thick, solid, color=orange, mark=+,mark options={scale=1.5,solid}] coordinates {
(41538, 0.00194538)
(80568, 0.000365321)
(158002, 7.05736e-05)
(311103, 1.25492e-05)
(614645, 2.30131e-06)
(1219326, 4.03499e-07)
(2426675, 7.11754e-08)
};
\addlegendentry{$e_4^{L^2}$}

\addplot [thick, dashed, color=darkpastelgreen, mark=o,mark options={scale=1.5,solid}] coordinates {
(27022, 1.36951e-09)
(52336, 3.65089e-10)
(102526, 9.51332e-11)
(201712, 2.45774e-11)
(398292, 6.30372e-12)
(789788, 1.60317e-12)
(1571363, 4.04992e-13)
};
\addlegendentry{$N_\text{DoFs}^{-2}$}

\addplot [thick, dashed, color=orange, mark=+,mark options={scale=1.5,solid}] coordinates {
(41538, 2.84371e-12)
(80568, 5.42742e-13)
(158002, 1.00773e-13)
(311103, 1.85242e-14)
(614645, 3.37629e-15)
(1219326, 6.09117e-16)
(2426675, 1.09011e-16)
};
\addlegendentry{$N_\text{DoFs}^{-5/2}$}

\end{loglogaxis}

\end{tikzpicture}

\\

\begin{tikzpicture}[trim axis left]

\begin{loglogaxis}[
legend style={font=\tiny},
xlabel={$N_\text{DoFs}$},
legend pos=south west
]

\addplot [thick, solid, color=cyan, mark=o,mark options={scale=1.5,solid}] coordinates {
(58415, 0.000985403)
(113423, 0.000132501)
(222607, 2.20569e-05)
(438560, 3.63797e-06)
(866822, 5.82841e-07)
(1720125, 9.01894e-08)
(3424069, 1.37945e-08)
};
\addlegendentry{$e_5^S$}

\addplot [thick, solid, color=olive, mark=+,mark options={scale=1.5,solid}] coordinates {
(77653, 0.000201792)
(150901, 2.38802e-05)
(296341, 4.13315e-06)
(584083, 3.78733e-07)
(1154823, 4.55662e-08)
(2292185, 4.82699e-09)
(4563545, 5.6019e-10)
};
\addlegendentry{$e_6^S$}

\addplot [thick, dashed, color=cyan, mark=o,mark options={scale=1.5,solid}] coordinates {
(58415, 1.21252e-12)
(113423, 2.30806e-13)
(222607, 4.27713e-14)
(438560, 7.85104e-15)
(866822, 1.42947e-15)
(1720125, 2.57691e-16)
(3424069, 4.60939e-17)
};
\addlegendentry{$N_\text{DoFs}^{-5/2}$}

\addplot [thick, dashed, color=olive, mark=+,mark options={scale=1.5,solid}] coordinates {
(77653, 2.13563e-15)
(150901, 2.91021e-16)
(296341, 3.8426e-17)
(584083, 5.01853e-18)
(1154823, 6.49312e-19)
(2292185, 8.30331e-20)
(4563545, 1.05219e-20)
};
\addlegendentry{$N_\text{DoFs}^{-3}$}

\end{loglogaxis}
\end{tikzpicture}

&

\begin{tikzpicture}[trim axis right]
\begin{loglogaxis}[
legend style={font=\tiny},
xlabel={$N_\text{DoFs}$},
legend pos=south west
]

\addplot [thick, solid, color=cyan, mark=o,mark options={scale=1.5,solid}] coordinates {
(58415, 0.000542626)
(113423, 5.73499e-05)
(222607, 6.78269e-06)
(438560, 8.3722e-07)
(866822, 1.05252e-07)
(1720125, 1.31633e-08)
(3424069, 1.64809e-09)
};
\addlegendentry{$e_5^{L^2}$}

\addplot [thick, solid, color=olive, mark=+,mark options={scale=1.5,solid}] coordinates {
(77653, 0.000113372)
(150901, 8.81757e-06)
(296341, 1.16749e-06)
(584083, 1.02236e-07)
(1154823, 8.26173e-09)
(2292185, 2.96253e-09)
(4563545, 1.46011e-09)
};
\addlegendentry{$e_6^{L^2}$}

\addplot [thick, dashed, color=cyan, mark=o,mark options={scale=1.5,solid}] coordinates {
(58415, 5.0168e-15)
(113423, 6.85325e-16)
(222607, 9.06533e-17)
(438560, 1.18553e-17)
(866822, 1.53536e-18)
(1720125, 1.96481e-19)
(3424069, 2.49099e-20)
};
\addlegendentry{$N_\text{DoFs}^{-3}$}

\addplot [thick, dashed, color=olive, mark=+,mark options={scale=1.5,solid}] coordinates {
(77653, 7.66384e-18)
(150901, 7.49165e-19)
(296341, 7.05877e-20)
(584083, 6.56658e-21)
(1154823, 6.04221e-22)
(2292185, 5.48436e-23)
(4563545, 4.9254e-24)
};
\addlegendentry{$N_\text{DoFs}^{-7/2}$}

\end{loglogaxis}

\end{tikzpicture}

\\

\end{tabular}
}
\caption{Second test case: logarithmic plot of the energy norm error $e^S_m$ for increasing polynomial order $m=1,\dots,6$ and of the functions $N_\text{DoFs}^{-m/2}$ (dashed lines). }
\label{fig:flower_error}
\end{figure}


\begin{table}
	\centering
	\caption{Third test case: relative errors in the energy norm for $m = 1,\dots,6.$}
	\begin{tabular}{
			c
			S[table-format=1.{\roundPrecision}e-1]
			S[table-format=1.{\roundPrecision}e-1]
			S[table-format=1.{\roundPrecision}e-1]
			S[table-format=1.{\roundPrecision}e-1]
			S[table-format=1.{\roundPrecision}e-1]
			S[table-format=1.{\roundPrecision}e-1]
			S[table-format=1.{\roundPrecision}e-1]
		}
		\toprule
		{Mesh} & {$h$} & {$e_1^S$} & {$e_2^S$} & {$e_3^S$} & {$e_4^S$} & {$e_5^S$} & {$e_6^S$}\\
		\midrule
		spiral$_{1}$      &   1.226609e-02   &   2.131452e-01   &   1.945072e-01   &   1.9476e-01   &   1.540414e-01   &   1.584283e-01   &   1.432532e-01\\
		spiral$_{2}$      &   9.04838e-03   &   1.731372e-01   &   1.205959e-01   &   1.05604e-01   &   9.205270e-02   &   9.248680e-02   &   7.937706e-02\\
		spiral$_{3}$      &   6.63023e-03   &   9.402012e-02   &   5.735291e-02   &   4.91294e-02   &   3.554941e-02   &   2.414495e-02   &   1.591393e-02\\
		spiral$_{4}$      &   4.82523e-03   &   5.033507e-02   &   2.647245e-02   &   2.04633e-02   &   1.184688e-02   &   5.093357e-03   &   2.252170e-03\\
		spiral$_{5}$      &   3.47696e-03   &   2.303949e-02   &   1.423990e-02   &   6.37605e-03   &   2.961279e-03   &   7.498457e-04   &   2.865194e-04\\
		spiral$_{6}$      &   2.49798e-03   &   1.204840e-02   &   6.214234e-03   &   1.78195e-03   &   7.169268e-04   &   1.194759e-04   &   3.196001e-05\\
		spiral$_{7}$      &   1.78509e-03   &   6.809396e-03   &   2.537865e-03   &   4.7333e-04   &   1.569801e-04   &   1.571347e-05   &   3.081353e-06\\
		\bottomrule
	\end{tabular}
	\label{tab:spiral_meshes}
\end{table}

\begin{figure}[htb]
\centering
\resizebox{\textwidth}{!}{
\begin{tabular}{rl}

\begin{tikzpicture}[trim axis left]

\begin{loglogaxis}[
legend style={font=\tiny},
legend pos=south west
]

\addplot [thick, solid, color=red, mark=o,mark options={scale=1.5,solid}] coordinates {
(6652, 0.213145)
(12214, 0.173137)
(22748, 0.0940201)
(42950, 0.0503351)
(82718, 0.0230395)
(160259, 0.0120484)
(313819, 0.0068094)
};
\addlegendentry{$e_1^S$}

\addplot [thick, solid, color=blue, mark=+,mark options={scale=1.5,solid}] coordinates {
(18931, 0.194507)
(35183, 0.120596)
(66197, 0.0573529)
(125947, 0.0264724)
(244065, 0.0142399)
(474995, 0.00621423)
(933247, 0.00253787)
};
\addlegendentry{$e_2^S$}

\addplot [thick, dashed, color=red, mark=o,mark options={scale=1.5,solid}] coordinates {
(6652, 0.0122609)
(12214, 0.00904838)
(22748, 0.00663023)
(42950, 0.00482523)
(82718, 0.00347696)
(160259, 0.00249798)
(313819, 0.00178509)
};
\addlegendentry{$N_\text{DoFs}^{-1/2}$}

\addplot [thick, dashed, color=blue, mark=+,mark options={scale=1.5,solid}] coordinates {
(18931, 5.28234e-05)
(35183, 2.84228e-05)
(66197, 1.51064e-05)
(125947, 7.93985e-06)
(244065, 4.09727e-06)
(474995, 2.10529e-06)
(933247, 1.07153e-06)
};
\addlegendentry{$N_\text{DoFs}^{-1}$}

\end{loglogaxis}
\end{tikzpicture}

&

\begin{tikzpicture}[trim axis right]
\begin{loglogaxis}[
legend style={font=\tiny},
legend pos=south west
]

\addplot [thick, solid, color=red, mark=o,mark options={scale=1.5,solid}] coordinates {
(6652, 0.753253)
(12214, 0.576019)
(22748, 0.380509)
(42950, 0.234413)
(82718, 0.131399)
(160259, 0.068187)
(313819, 0.0339457)
};
\addlegendentry{$e_1^{L^2}$}

\addplot [thick, solid, color=blue, mark=+,mark options={scale=1.5,solid}] coordinates {
(18931, 0.482766)
(35183, 0.311475)
(66197, 0.169269)
(125947, 0.0823331)
(244065, 0.0332109)
(474995, 0.0119817)
(933247, 0.00415871)
};
\addlegendentry{$e_2^{L^2}$}

\addplot [thick, dashed, color=red, mark=o,mark options={scale=1.5,solid}] coordinates {
(6652, 0.000150331)
(12214, 8.18733e-05)
(22748, 4.39599e-05)
(42950, 2.32829e-05)
(82718, 1.20893e-05)
(160259, 6.2399e-06)
(313819, 3.18655e-06)
};
\addlegendentry{$N_\text{DoFs}^{-1}$}

\addplot [thick, dashed, color=blue, mark=+,mark options={scale=1.5,solid}] coordinates {
(18931, 3.83919e-07)
(35183, 1.51531e-07)
(66197, 5.87141e-08)
(125947, 2.23727e-08)
(244065, 8.29357e-09)
(474995, 3.05469e-09)
(933247, 1.10919e-09)
};
\addlegendentry{$N_\text{DoFs}^{-3/2}$}

\end{loglogaxis}

\end{tikzpicture}

\\

\begin{tikzpicture}[trim axis left]

\begin{loglogaxis}[
legend style={font=\tiny},
legend pos=south west
]

\addplot [thick, solid, color=darkpastelgreen, mark=o,mark options={scale=1.5,solid}] coordinates {
(34024, 0.19476)
(63530, 0.105604)
(119997, 0.0491294)
(228968, 0.0204633)
(444727, 0.00637605)
(866970, 0.00178195)
(1705480, 0.00047333)
};
\addlegendentry{$e_3^S$}

\addplot [thick, solid, color=orange, mark=+,mark options={scale=1.5,solid}] coordinates {
(51931, 0.154041)
(97255, 0.0920527)
(184148, 0.0355494)
(352013, 0.0118469)
(684704, 0.00296128)
(1336184, 0.000716927)
(2630518, 0.00015698)
};
\addlegendentry{$e_4^S$}

\addplot [thick, dashed, color=darkpastelgreen, mark=o,mark options={scale=1.5,solid}] coordinates {
(34024, 1.59339e-07)
(63530, 6.24499e-08)
(119997, 2.40572e-08)
(228968, 9.12721e-09)
(444727, 3.37178e-09)
(866970, 1.23878e-09)
(1705480, 4.48983e-10)
};
\addlegendentry{$N_\text{DoFs}^{-3/2}$}

\addplot [thick, dashed, color=orange, mark=+,mark options={scale=1.5,solid}] coordinates {
(51931, 3.70806e-10)
(97255, 1.05725e-10)
(184148, 2.94894e-11)
(352013, 8.07017e-12)
(684704, 2.13302e-12)
(1336184, 5.60102e-13)
(2630518, 1.44517e-13)
};
\addlegendentry{$N_\text{DoFs}^{-2}$}

\end{loglogaxis}
\end{tikzpicture}

&

\begin{tikzpicture}[trim axis right]
\begin{loglogaxis}[
legend style={font=\tiny},
legend pos=south west
]

\addplot [thick, solid, color=darkpastelgreen, mark=o,mark options={scale=1.5,solid}] coordinates {
(34024, 0.330481)
(63530, 0.183948)
(119997, 0.0832744)
(228968, 0.0298912)
(444727, 0.00845675)
(866970, 0.00221219)
(1705480, 0.000506112)
};
\addlegendentry{$e_3^{L^2}$}

\addplot [thick, solid, color=orange, mark=+,mark options={scale=1.5,solid}] coordinates {
(51931, 0.223275)
(97255, 0.115563)
(184148, 0.038979)
(352013, 0.00945892)
(684704, 0.00183905)
(1336184, 0.000328596)
(2630518, 5.39849e-05)
};
\addlegendentry{$e_4^{L^2}$}

\addplot [thick, dashed, color=darkpastelgreen, mark=o,mark options={scale=1.5,solid}] coordinates {
(34024, 8.63832e-10)
(63530, 2.47766e-10)
(119997, 6.94479e-11)
(228968, 1.90744e-11)
(444727, 5.05607e-12)
(866970, 1.33043e-12)
(1705480, 3.43801e-13)
};
\addlegendentry{$N_\text{DoFs}^{-2}$}

\addplot [thick, dashed, color=orange, mark=+,mark options={scale=1.5,solid}] coordinates {
(51931, 1.62717e-12)
(97255, 3.39016e-13)
(184148, 6.87199e-14)
(352013, 1.3602e-14)
(684704, 2.57776e-15)
(1336184, 4.84545e-16)
(2630518, 8.91039e-17)
};
\addlegendentry{$N_\text{DoFs}^{-5/2}$}

\end{loglogaxis}

\end{tikzpicture}

\\

\begin{tikzpicture}[trim axis left]

\begin{loglogaxis}[
legend style={font=\tiny},
xlabel={$N_\text{DoFs}$},
legend pos=south west
]

\addplot [thick, solid, color=cyan, mark=o,mark options={scale=1.5,solid}] coordinates {
(72652, 0.158428)
(136358, 0.0924868)
(258650, 0.0241449)
(495082, 0.00509336)
(963996, 0.000749846)
(1882637, 0.000119476)
(3708361, 1.57135e-05)
};
\addlegendentry{$e_5^S$}

\addplot [thick, solid, color=olive, mark=+,mark options={scale=1.5,solid}] coordinates {
(96187, 0.143253)
(180839, 0.0793771)
(343503, 0.0159139)
(658175, 0.00225217)
(1282603, 0.000286519)
(2506329, 3.196e-05)
(4939009, 3.08135e-06)
};
\addlegendentry{$e_6^S$}

\addplot [thick, dashed, color=cyan, mark=o,mark options={scale=1.5,solid}] coordinates {
(72652, 7.0288e-13)
(136358, 1.45646e-13)
(258650, 2.93913e-14)
(495082, 5.79839e-15)
(963996, 1.096e-15)
(1882637, 2.05629e-16)
(3708361, 3.77611e-17)
};
\addlegendentry{$N_\text{DoFs}^{-5/2}$}

\addplot [thick, dashed, color=olive, mark=+,mark options={scale=1.5,solid}] coordinates {
(96187, 1.1237e-15)
(180839, 1.69092e-16)
(343503, 2.46722e-17)
(658175, 3.50732e-18)
(1282603, 4.7394e-19)
(2506329, 6.35164e-20)
(4939009, 8.30005e-21)
};
\addlegendentry{$N_\text{DoFs}^{-3}$}

\end{loglogaxis}
\end{tikzpicture}

&

\begin{tikzpicture}[trim axis right]
\begin{loglogaxis}[
legend style={font=\tiny},
xlabel={$N_\text{DoFs}$},
legend pos=south west
]

\addplot [thick, solid, color=cyan, mark=o,mark options={scale=1.5,solid}] coordinates {
(72652, 0.199363)
(136358, 0.276779)
(258650, 0.02087)
(495082, 0.00361803)
(963996, 0.00048517)
(1882637, 6.32164e-05)
(3708361, 8.62977e-06)
};
\addlegendentry{$e_5^{L^2}$}

\addplot [thick, solid, color=olive, mark=+,mark options={scale=1.5,solid}] coordinates {
(96187, 0.295652)
(180839, 0.0828668)
(343503, 0.0128503)
(658175, 0.00128577)
(1282603, 0.000183502)
(2506329, 1.28395e-05)
(4939009, 1.15234e-06)
};
\addlegendentry{$e_6^{L^2}$}

\addplot [thick, dashed, color=cyan, mark=o,mark options={scale=1.5,solid}] coordinates {
(72652, 2.6077e-15)
(136358, 3.94419e-16)
(258650, 5.77913e-17)
(495082, 8.24079e-18)
(963996, 1.11628e-18)
(1882637, 1.49865e-19)
(3708361, 1.96089e-20)
};
\addlegendentry{$N_\text{DoFs}^{-3}$}

\addplot [thick, dashed, color=olive, mark=+,mark options={scale=1.5,solid}] coordinates {
(96187, 3.6232e-18)
(180839, 3.97629e-19)
(343503, 4.20962e-20)
(658175, 4.3232e-21)
(1282603, 4.18482e-22)
(2506329, 4.01205e-23)
(4939009, 3.73474e-24)
};
\addlegendentry{$N_\text{DoFs}^{-7/2}$}

\end{loglogaxis}

\end{tikzpicture}

\\

\end{tabular}
}
\caption{Third test case: logarithmic plot of the energy norm error $e^S_m$ for increasing polynomial order $m=1,\dots,6$ and of the functions $N_\text{DoFs}^{-m/2}$ (dashed lines). }
\label{fig:spiral_error}
\end{figure}

\bibliographystyle{amsplain}


\end{document}